\newcommand{\version}{\today}
\theoremstyle{plain}
\newtheorem{thm}{THEOREM}[section]
\newtheorem{lm}[thm]{LEMMA}
\theoremstyle{definition}
\newtheorem{remark}[thm]{Remark}
\theoremstyle{remark}
\newcommand{\upchi}{\raise1pt\hbox{$\chi$}}
\newcommand{\R}{{\mathord{\mathbb R}}}
\newcommand{\hn}{{\mathord{\widehat{n}}}}
\newcommand{\F}{{\mathcal{F}}}
\newcommand{\E}{{\mathcal{E}}}
\renewcommand{\|}{{\Vert}}
\numberwithin{equation}{section}
\newcommand{\un}{{\rm 1\kern -2.5pt l}}
    \newcommand{\one}{{\bf 1}}
\begin{document}
%%%%%%%%DRAFT%%%%%%%
\iffalse
% [arxiv_v2: inline-PS \special stripped, 158 chars]
\fi
%%%%%%%%%%%%%%%%%%%%%%
\markboth{\scriptsize{CL \version}}{\scriptsize{CL December 13, 2017}}
\def\mn{{\bf M}_n}
\def\hn{{\bf H}_n}
\def\hnp{{\bf H}_n^+}
\def\hmnp{{\bf H}_{mn}^+}
\def\H{{\mathcal H}}
\title{{\sc A dual form of the sharp Nash inequality and its weighted 
generalization}}
\author{
\vspace{5pt}  Eric A. Carlen$^{1}$  and Elliott H. Lieb$^{2}$ \\
\vspace{5pt}\small{$^{1}$ Department of Mathematics, Hill Center,}\\[-6pt]
\small{Rutgers University,
110 Frelinghuysen Road
Piscataway NJ 08854-8019 USA}\\
\vspace{5pt}\small{$^{2}$ Departments of Mathematics and Physics, Jadwin
Hall, Princeton University}\\[-6pt]
\small{ Washington Rd.,  Princeton, NJ  08544.}\\[-6pt]
 }

\date{December 13, 2017}
\maketitle

\footnotetext [1]{Work partially supported by U.S.
National Science Foundation grant  DMS 1501007.}

\footnotetext [2]{Work partially supported by U.S. National
Science Foundation
grant PHY 1265118 \hfill\break
\copyright\,  2017 by the authors. This paper may be reproduced, in its
entirety, for non-commercial purposes.
}

\begin{abstract} The well known  duality 
between the Sobolev inequality and the Hardy-Littlewood-Sobolev 
inequality suggests that the 
Nash inequality could also have an interesting dual form, even though the 
Nash inequality relates three norms instead of two. 
We provide such a dual form here with sharp constants. 
This 
dual inequality relates the $L^2$ norm 
to  the infimal convolution of the $L^\infty $ and  $H^{-1}$ norms.  
The computation of this infimal convolution is a minimization problem, 
which we
solve explicitly, thus providing a new proof of the sharp Nash inequality 
itself. This proof, via duality,  also yields the sharp form of some new,
weighted generalizations of the Nash inequality as well as  the dual of 
these 
weighted variants. 
\end{abstract}

\medskip
\centerline{Key Words: duality, Nash inequality, infimal convolution.}

%\centerline{ MSC: }
\bigskip

%%%%%%%%%%%%%%%%%%%%%%%%%%%%%%%%%%%%%%%%%%%%%%%
%%%%%%%%%%%%%%%%%%%%%%%%%%%%%%%%%%%%%%%%%%%%%%%

\maketitle

\section{Introduction}  

Our focus is on a dual form of the Nash inequality for functions on $\R^n$
that  bounds
the infimal convolution of the $L^\infty$ norm  and the squared
$H^{-1}$ norm in terms of the $L^2$ norm as follows:
\begin{equation}\label{nash8A}
L_n\|g\|_{2}^{\frac{2n+4}{n+4}}  \geq \inf_{ h\in L^{2n/(n+2}(\R^n)}\left\{  
\frac12 \| (-\Delta)^{-1/2}(g-h)\|^2_{2} +  \|h\|_{\infty}  
\ \right\} \ . 
\end{equation}
The sharp constant $L_n$ is  specified below in Theorem \ref{main1}.  (See 
the remarks at the beginning of Section~\ref{sec2} on the meaning of 
$\|(-\Delta)^{1/2}f\|_2$  in dimensions $1$ and $2$.)

The Nash inequality is one member of the  family 
of inequalities known as 
the {\em Gagliardo-Nirenberg-Sobolev} (GNS) {\em inequalities} \cite{G,Ni}: For 
all $1\leq p < q \leq 2n/(n-2)$ ($1 \leq p <  q <\infty$ for $n=1,2$),  there is a finite constant $C$ such that for all locally integrable functions $u$ on $\R^n$
with a distributional gradient that is a square integrable function,
\begin{equation}\label{gnsgen}
\|u\|_q \leq  C\|u\|_p^{1-\theta}\|\nabla u\|_2^\theta\ 
\end{equation}
where
\begin{equation}\label{gnsgen2}
\frac{1}{q} = \frac{\theta(n-2)}{2n} + \frac{1-\theta}{p}\ ,
\end{equation}
and $\|\cdot\|_p$ denotes the $L^p$ norm with respect to Lebesgue measure on $\R^n$. (A more general version, not discussed here, 
allows for the consideration of $\|\nabla u\|_r$ for $r\neq 2$.)
For $n\geq 3$ and $q = 2n/(n-2)$, in which case  (\ref{gnsgen2}) gives $\theta =1$, (\ref{gnsgen}) reduces to the {\em Sobolev inequality}
\begin{equation}\label{sharpsob}
\|u\|_{2n/(n-2)}^2 \leq S_n \|\nabla u\|_2^2\ ,
\end{equation}
for which the sharp constant $S_n$ was determined by Aubin and Talenti \cite{Au,Ta}.  There are only a few other cases of (\ref{gnsgen}) in which the sharp constant is known: 
One is that in which 
$p= r+1$ and $q= 2r$ for some $r>0$,  This family of sharp inequalities is due 
to Del Pino and Dolbeault \cite{DD}
The other family of sharp constants is for the Nash inequality \cite{Na}, in which $p=1$ and $q=2$, and hence $\theta = n/(n+2)$.
The sharp constants in this case were found by Carlen and Loss \cite{CL}.
It is well known that the sharp Sobolev inequality (\ref{sharpsob}) is equivalent, by duality, 
 to the sharp 
{\em Hardy-Littlewood-Sobolev} (HLS) {\em inequality} \cite[Thm 8.3]{LL},
and that this equivalence extends  to a more general form of the Sobolev inequality: For all $n\geq 2$ and all $0 < \alpha < n/2$, there is a constant $C_{n,\alpha}$ so that 
\begin{equation}\label{sobgen}
  \|u\|_{2n/(n-2\alpha)}^2  \leq C_{n,\alpha} 
\|(-\Delta)^{\alpha/2} u\|_2^2 \, 
\end{equation}
which reduces to (\ref{gnsgen}) for $\theta = 1$ when $\alpha =1$.   For all (real)
$v\in L^{2n/(n+2\alpha)}$, and all (real)  $u\in L^{2n/(n-2\alpha)}$,  (\ref{sharpsob}) is equivalent to 
$$\int_{\R^n} vu{\rm d}x -  \frac12 \|u\|_{2n/(n-2\alpha)}^2   \geq   \int_{\R^n} vu{\rm d}x  - \frac12 C_{n,\alpha} \|(-\Delta)^{\alpha/2} u\|_2^2\ .$$
Taking the supremum over $u$, which amounts to computing the Legendre transforms (see Rockafellar \cite{R}) of the  
proper lower semicontinuous convex functionals on both sides of 
(\ref{sharpsob}), one obtains
the general form of the HLS inequality:
\begin{equation}\label{hlsgen}
  \|v\|_{2n/(n+2\alpha)}^2  \geq C_{n,\alpha}^{-1}
\|(-\Delta)^{-\alpha/2} v\|_2^2 \ ,
\end{equation}
where  $ C_{n,\alpha}$, is the same consstant as in the Sobolev inequality 
 \eqref{sobgen}. See \cite{L83,LL} for the explicit value.

Since the Legendre transform is involutive for proper 
lower semicontinuous convex functionals, the argument is reversible, so that (\ref{sobgen}) 
and (\ref{hlsgen}) are equivalent,
and there is even a one-to-one correspondence between cases of equality in these two inequalities.

This duality is  useful: The 
sharp form of  (\ref{hlsgen}) was first obtained by Lieb \cite{L83}, and 
then, by this duality, the sharp form of (\ref{sobgen}) 
follows immediately. It is  natural that the sharp form of (\ref{sobgen}) 
was first obtained in this
indirect manner because 
 the functional $u\mapsto \|(-\Delta)^{\alpha/2}u\|_2$ is {\em not} monotone decreasing under 
spherically symmetric decreasing rearrangement for $\alpha > 1$, but  monotonicity under rearrangement 
 is valid in   HLS inequality
(\ref{hlsgen}) for all $0 < \alpha < n/2$, Thus, the dual relation between (\ref{sobgen}) and (\ref{hlsgen}) means that extremals of  
(\ref{sobgen}) must be translates of symmetric decreasing functions despite the non-monotonicity of  $u\mapsto \|(-\Delta)^{\alpha/2}u\|_2$
under rearrangement.

It is natural, therefore, to ask what are the dual forms of the 
various GNS 
inequalities with $\theta <1$ in \eqref{gnsgen}, recalling that $\theta=1$ 
corresponds to the usual Sobolev inequality. 
The first step is to produce an equivalent form of (\ref{gnsgen}) in which 
the right side is a convex function of $u$; even at this point there 
are choices to be made.  One way is to use the 
arithmetic-geometric mean inequality to deduce
\begin{equation}\label{gnsgen4}
\|u\|_q^2 \leq  (1-\theta) C^{2/(1-\theta)}\|u\|_p^2 + \theta \|\nabla u\|_2^2\ .
\end{equation}
since the two terms on the right side of (\ref{gnsgen4}) scale as different powers of $\lambda$ when $u(x)$ is replaced by
$u(\lambda x)$, one readily recovers (\ref{gnsgen}) from  (\ref{gnsgen4}). 

A second way is to introduce the functional $\Phi_p(u)$ defined by
\begin{equation}\label{phidef}
\Phi_p(u) = \begin{cases} 0 & \|u\|_p \leq 1\\ \infty & \|u\|_p > 1\ 
.\end{cases}\ 
\end{equation}
Then (\ref{gnsgen}) is also equivalent to the inequality
\begin{equation}\label{gnsgen5}
\|u\|_q^{2/\theta} \leq  C^{2/\theta}\|\nabla u\|_2^2 + \Phi_p(u)\ .
\end{equation}

Note that  \eqref{gnsgen} is both homogeneous of degree 2 and scale 
invariant, \eqref{gnsgen4} is homogeneous of degree 2, but {\it not} scale 
invariant, while \eqref{gnsgen5} is {\it not} homogeneous 
of any degree, but  \eqref{gnsgen5} is scale invariant in the sense that 
replacing $u(x)$ by  
$\lambda^{n/p}  (\lambda x)$ in (\ref{gnsgen5}), both sides scale with 
the 
same power of $\lambda$.  

At the expense of giving up either homogeneity or scale invariance, we have 
arrived at a pair of inequalities in which  the right hand sides are 
convex.

Both (\ref{gnsgen4}) and (\ref{gnsgen5}) have the general form
\begin{equation}\label{nash4}
\F(u) \leq \E_1(u) + \E_2(u)\ 
\end{equation}
where $\F$, $\E_1$ and $\E_2$ are proper lower semicontinuous convex 
functionals on a Banach space $X$.
We are thus brought to the point of needing and computing the Legendre 
transform of a 
{\it sum } of convex functions, and this leads us to the {\it infimal 
convolution,} as we now recall.

To this end, let $Y$ be another Banach space, and 
let $\langle \cdot, \cdot  \rangle$ be a bilinear form on $X\times Y$ 
making $X$ and $Y$ a dual pair of Banach spaces. For example, we might have 
$X = 
L^p(\R^n)$ and $Y = L^{p'}(\R^n)$ with the usual pairing. The Legendre 
transform of $\E_1+\E_2$ is given by \cite{R}
\begin{equation}\label{infconv}
 (\E_1+\E_2)^* (v)    =:\E_1^* \square \E_2^*(v)  = \inf_{v_1}\{ 
\E_1^*(v-v_1) + \E_2^*(v_1)\},
\end{equation}
 for all $v\in Y$, where the right hand side is {\em the infimal 
convolution} of $\E_1^* $ and  $\E_2^*$.

Then, for any $v \in Y$,  $\langle u, v\rangle - \F(u) \geq \langle u, v 
\rangle 
- (\E_1(u) +\E_2(u) )$.   The supremum over $u \in X$ yields 
\begin{equation}\label{dualform}
\F^*(v) \geq (\E_1^* \square \E_2^*)(v)\ ,
\end{equation}
which is the  inequality dual to  \eqref{nash4}.

The inequality (\ref{dualform}) is likely to be much less useful, or even intelligible, than (\ref{nash4}) unless 
one can explicitly compute a closed form of  the  infimal convolution on the 
right side of (\ref{dualform}), or at least show that a 
minimizing $v_1$ exists in (\ref{infconv}), and provide a 
characterization of this minimizing $v_1$. This latter is what we shall do 
for the Nash inequality.

In order to do this for GNS inequalities for $1<p\leq 2$,
it is often useful to make a judicious change of variable and choice of the 
dual pair $(X,Y,\langle \cdot, \cdot\rangle)$.  For example, consider the 
case in which $p=2$. Let $u\in L^2(\R^n)$ be non-negative, and let $\rho(x) 
= u^2(x)$, so that $\rho$ is a density; i.e., a non-negative integrable 
function.  Note that  
\begin{equation}\label{fisher}
\|\nabla u\|_2^2 = \frac14 \int_{\R^n}\frac{|\nabla \rho|^2}{\rho}{\rm d}x =: \frac14 I(\rho)\ .
\end{equation}
The function $I(\rho)$, often called the {\em Fisher Information} when $\rho$ is a probability density, is evidently convex in $\rho$. Thus   (\ref{gnsgen5}) may be rewritten  as
\begin{equation}\label{gnsgen6}
\|\rho\|_{q/2}^{1/\theta}  \leq  \frac{C^{2/\theta}}{4}I(\rho)  + \Phi_1(\rho)\ .
\end{equation}
Take $X = L^1(\R^n)$ and $Y = L^\infty(\R^n)$ with the usual dual pairing. Then for $V\in L^\infty$, since $I(\rho)$ is homogeneous of degree one, 
\begin{eqnarray*}
&\phantom{=}&\sup_{\rho\in L^1, \rho \geq 0} \int_{\R^n}V(x)\rho(x){\rm d}x - \frac{C^{2/\theta}}{4}I(\rho)  + \Phi_1(\rho)\\
 &=&
\sup_{\rho\in L^1, \rho \geq 0}\left\{  \int_{\R^n}V(x)\rho(x){\rm d}x - \frac{C^{2/\theta}}{4}I(\rho) \ :\ \|\rho\|_1 = 1\right\}\nonumber\\
&=& \sup_{u\in L^2} \left\{  \int_{\R^n}V(x)u^2(x){\rm d}x - {C^{2/\theta}}\|\nabla u\|_2^2 \ :\ \|u\|_2 = 1\right\}
= \lambda_1(V)\ ,
\end{eqnarray*}
where $\lambda_1(V)$ is the largest eigenvalue of the operator 
$C^{2/\theta}\Delta + V$.

Therefore, if we define ${\displaystyle \E_1(\rho) = 
\frac{C^{2/\theta}}{4}I(\rho)}$ and $\E_2(\rho) = \Phi_1(\rho)$
where $I(\rho)$ is the Fisher information defined in (\ref{fisher}) and 
$\Phi_1$ is defined in (\ref{phidef}),
the calculation made just above says that 
\begin{equation}\label{infconv2}
\E_1^*\square \E_2^*(V) = \lambda_1(V)\ ,
\end{equation}

Therefore, the dual form of (\ref{gnsgen5})
with $p=2$, which is equivalent to the $p=2$ case of (\ref{gnsgen}), 
gives a sharp bound on $\lambda_1(V)$ in terms of $\|V\|_{q/(q-2)}$. The duality was 
essential in the proof of stability bounds for this sharp bound  given 
by Carlen, Frank and Lieb \cite{CFL}.

One can make a similar change of variables for $1 < p < 2$.  In this case, for $u\in L^p$, $u\geq 0$, define $\rho = u^p$. Then
$$\|u\|_2^2 = \frac{1}{p^2}\int_{\R^n} \frac{|\nabla \rho|^2}{\rho^{(p-1)(2/p)}}{\rm d} x$$
which is again convex in $\rho$, but it is not homogeneous of degree one, and no familiar functional emerges from the infimal convolution. 

Before turning to the case $p=1$, $q=2$ of (\ref{gnsgen}), which is the Nash inequality, consider the other way, given in (\ref{gnsgen4}), of writing (\ref{gnsgen}) in the form (\ref{nash4}). Then 
$$\F(u) = \|u\|_q^2\ , \quad\E_1(u) =   (1-\theta) C^{2/(1-\theta)}\|u\|_2^2 \quad{\rm and}\quad \E_2(u) =  \theta \|\nabla u\|_2^2\ .$$
Therefore, 
$$\E_1(u) + \E_2(u) = \langle u, [-\theta \Delta + (1-\theta) C^{2/(1-\theta)}\one]u\rangle_{L^2}\ .$$
In this case, we may take $X = Y = L^2(\R^n)$ with the usual pairing, the dual inequality  is
$$\|v\|_{q'}^2 \geq \langle v, [-\theta \Delta + (1-\theta) C^{2/(1-\theta)}\one]^{-1} v\rangle\ ,
$$
and thus  the  norm of the operator
$ [-\theta \Delta + (1-\theta) C^{2/(1-\theta)}\one]^{-1}$ from  $L^{q'}(\R^n)$ to $L^q(\R^n)$, $1/q + 1/q'= 1$,  is 
exactly one. 
(This gives an alternate characterization of the best constant in the 
 family  of GNS inequalities ({\ref{gnsgen}) when $p=2$.

\section{A dual form of the Nash inequality}\label{sec2}

We now turn to the case $p=1$, $q=2$ of (\ref{gnsgen}) so that $\theta = n/(n+2)$. This case of (\ref{gnsgen}) is often  written as
\begin{equation}\label{nash1}
\|f\|_2^{2+\frac4n} \leq C_n \|\nabla f\|_2^2 \|f \|_1^{\frac4n}\ .
\end{equation}
We take the equivalent form  (\ref{gnsgen5}) as the starting point for our computation of a dual form.   In computing the dual form, 
we shall have to compute
\begin{equation}\label{nash1b}
\sup_{f\in X} \langle g,f\rangle - \frac12 \|\nabla f\|_2^2\   % = \sup_{f\in X} \langle g,f\rangle - \frac12 \|(-\Delta)^{1/2} f\|_2^2 \ .
\end{equation}
for $g\in Y$ with an appropriate choice of the dual pair $(X,Y, \langle 
\cdot,\rangle)$. 
The supremum in (\ref{nash1b}), taken over $f\in C_0^\infty(\R^n)$, defines 
the  the $H^{-1}$ norm 
\cite{Lax} of a locally integrable function $g$:
$$\frac12 \|g\|_{H^{-1}}^2 = \sup_{f\in C_0^\infty(\R^n)} \int_{\R^n} gf {\rm d}x - \frac12 \|\nabla f\|_2^2\ .$$
In dimensions $1$ and $2$, when $g$ is integrable, $\|g\|_{H^{-1}} = \infty$ unless $\int_{\R^n}g{\rm d}x = 0$. To see this for $n=1$, take $R>0$, and 
$f_R(x) =\sqrt{R/2}$ for $|x|< R$, $f_R(x) = \sqrt{R/2}(1 - |x|/R)$ for $R \leq |x|\leq 2R$ and $f(x) = 0$ for $|x|> R$. Then for all $R$, $\|\nabla f_R\|_2 = 1$,
while
$$\lim_{\R\to \infty} \sqrt{\frac{2}{R}}\int_{\R} g f_R {\rm d}x =  \int_{\R} g  {\rm d}x\ .$$
Thus $\|g\|_{H^{-1}} = \infty$.  A similar argument may be made for $n=2$ taking  $f_R$ to be a multiple of  $1-  |\log(|x|/R)|^\alpha$, 
$\alpha\in (0,1/2)$,  for $R \leq |x| \leq eR$.

If we take $X= Y = L^2(\R^n)$ with the usual dual pairing, then the supremum in (\ref{nash1b}) is $\frac12   \|(-\Delta)^{-1/2} g\|_2^2$, where in dimension
$n\geq 2$, $(-\Delta)^{-1/2} g$ is a multiple of $\int_{\R^m} |x-y|^{1-n}g(y){\rm d}y$. We use the notation $\|(-\Delta)^{-1/2} g\|_2$ for the $H^{-1}$ norm in all dimensions. 

 In the variational calculations that follow, it is necessary to have the identity   $\|(-\Delta)^{-1/2} g\|_2^2  = \langle g, (-\Delta)^{-1} g\rangle$.  The 
Green's function for the Laplacian on $\R^n$ is $G_n(x,y) = K_n|x-y|^{2-n}$ for $n\neq 2$, and is $G_2(x,y) = K_n\log(|x-y|)$ for constants $K_n$ 
depending only on $n$.
Therefore, we must  choose the dual pair $(X,Y, \langle\cdot,\cdot\rangle)$ so that for all $x$, $G_n(x,y)g(y)$ is integrable in 
$y$. By the HLS inequality, for $n\geq 3$, we may take $Y = L^{2n/(n+2)}(\R^n)$ and  $X = L^{2n/(n-2)}(\R^n)$ with the usual dual pairing.

For $n=1$, there is no singularity in $G_1(x,y)$ at $y=x$, but instead there is  linear growth as $|x-y|\to \infty$. Therefore we let $\mu_1$ 
denote the weighted measure ${\rm d}\mu_1(x) = (1+|x|){\rm d}x$, and take $Y = L^1(\R,\mu_1)$. With the standard dual pairing, 
we may take $X$ to consist of all measurable functions $f$ such that ${\rm 
ess sup}\{ |f(x)|/(1+|x|)\} < \infty$.  For $n=2$, $G_2(x,y)$ 
has both a logarithmic singularity at $x=y$ and logarithmic growth as $|x-y|\to \infty$.   
Let   $\mu_2$   denote the weighted measure 
${\rm d}\mu_2(x) = \log (e+|x|^2){\rm d}x$, and let $Y$ 
denote the Orlicz space $L\log L(\R^2,\mu_2)$, and let $X$ be its standard 
dual.  Summarizing,
\begin{equation}\label{Ydef}
Y = \begin{cases}   L^{2n/(n+2)}(\R^n)  & n\geq 3\\
L\log L(\R^2,\mu_2) & n=2\\
L^1(\R,\mu_1) & n=1\end{cases}
\end{equation}
where ${\rm d}\mu_1(x) = (1+|x|){\rm d}x$ and  ${\rm d}\mu_2(x) = \log (e+|x|^2){\rm d}x$

Then for all $g\in Y$, 
\begin{equation}\label{ptoential}
\varphi_g(x) := \int_{\R^n}G_n(x,y)g(y){\rm d}y 
\end{equation}
is well-defined and continuous; we refer to $\varphi_g$ as the {\em  potential of $g$}.

With the dual pairs chosen, we define
convex functionals on $X$ as follows:
\begin{equation}\label{nash3}
\F(f) = \frac12 C_n^{-1}\|f\|_2^{2+\frac4n} \ ,\qquad \E_1(f) =  \frac12\|\nabla f\|_2^2\ ,\quad \E_2(f) = \begin{cases} 0 & \|f\|_1 \leq 1\\ \infty & \|f\|_1 > 1\ .\end{cases}\ .
\end{equation}
(The functionals are defined to be $+\infty$ when the integrals defining the norms in their definition are infinite.)
We may then rewrite (\ref{nash1}) as
\begin{equation}\label{nash4b}
\F(f) \leq \E_1(f) + \E_2(f)\ .
\end{equation}
Upon computing the Legendre transforms, the equivalent {dual form of 
the Nash inequality is
\begin{equation}\label{nash4b}
\F^*(g) \geq \E_1^*\square \E_2^*(g)\ .
\end{equation}
By direct computation, $\F^*(g) = L_n\|g\|_2^{\frac{2n+4}{n+4}}$,
where $L_n$ is a constant that may be evaluated in terms of $C_n$, but see (\ref{lndef}) below. 

By our choice of the dual pairs,   we have that }
$\E_1^*(g) = \frac12 \langle g, (-\Delta)^{-1} g\rangle_{L^2}$, and
$\E_2^*(g) = \|g\|_\infty$.   Define 
${\mathcal G}(g) := \E_1^*\square \E_2^*(g)$ so that, more explicitly,
\begin{equation}\label{Gdef}
{\mathcal G}(g)  := \inf_{ h\in L^{2n/(n+2}(\R^n)}\left\{  
\frac12 \| (-\Delta)^{-1/2}(g-h)\|^2_{2} +  \|h\|_{\infty}  
\ \right\} .
\end{equation}
Thus we obtain the  following  {\bf dual version of the Nash inequality:}

\begin{thm} \label{main1} 
 For all $g\in L^2(\R^n)$, 
 \begin{equation}\label{Gdef2}
 L_n\|g\|_2^{\frac{2n+4}{n+4}} \geq {\mathcal G}(g)
 \end{equation}
 where ${\mathcal G}(g)$ is given by \eqref{Gdef}
and
\begin{equation}\label{lndef}
 L_n = \frac12 \left( {\frac{1}{\mu_1} \|\widehat g\|_2^2 + 
\langle \widehat g 
\rangle}\right){\|\widehat g\|_2^{-2(n+2)/(n+4)}},
\end{equation}
and where $\mu_1 >0$ is the principal eigenvalue of 
the (negative) Neumann Laplacian on the unit ball $B\subset \R^n$.
and  $\widehat g $ is the  positive radial monotone decreasing 
function 
supported on  $B$, specified as 
follows.

\smallskip\noindent
(1) $\widehat g-\langle\widehat  g \rangle $ is the principal eigenfunction 
of the Neumann Laplacian on $B$.

\smallskip\noindent
(2) $\int_{\R^n} (-\Delta)^{-1} (\widehat g-\langle \widehat g \rangle)dx 
=1$.

\smallskip\noindent
A non-negative function $g$ satisfies \eqref{nash8A} with equality if and 
only if, for some $x_0 \in \R^n$ and $\lambda >0$ 
\begin{equation}
 g(x) = \lambda^{n+2} \widehat g (\lambda x).
\end{equation}
\end{thm}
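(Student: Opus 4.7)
The plan is to prove \eqref{Gdef2} by constructing a near-optimal test function in the infimal convolution defining $\mathcal{G}(g)$, and by isolating the equality case through a sharp Poincar\'e-type bound on the $H^{-1}$ norm. First I reduce to $g \geq 0$ radial monotone decreasing via symmetric decreasing rearrangement: the $L^2$ norm is preserved, and $\mathcal{G}(g)$ does not increase, because for any admissible $h$ the rearrangement $h^*$ satisfies $\|h^*\|_\infty = \|h\|_\infty$, while Riesz's rearrangement inequality applied to the positive Green's function kernel yields $\|(-\Delta)^{-1/2}(g^* - h^*)\|_2 \leq \|(-\Delta)^{-1/2}(g - h)\|_2$.

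The key lemma is: for $w \in L^2(\R^n)$ supported in $B_R$ with $\int w\, dx = 0$,
\[
\|(-\Delta)^{-1/2} w\|_{L^2(\R^n)}^2 \leq \frac{1}{\mu_1(B_R)}\|w\|_{L^2(B_R)}^2,
\]
with equality precisely when $w$ is proportional to the principal Neumann eigenfunction $\psi_{B_R}$ on $B_R$. This follows from $\|w\|_{H^{-1}} = \sup_{\|\nabla u\|_2 = 1} \int wu$, the identity $\int wu = \int_{B_R} w(u - \langle u\rangle_{B_R})$, Cauchy--Schwarz, and the Neumann--Poincar\'e inequality on $B_R$. Equality is realized by the explicit $C^1(\R^n)$ solution $\phi = \mu_1(B_R)^{-1}(\psi_{B_R} - \psi_{B_R}|_{\partial B_R})\mathbf{1}_{B_R}$ of $-\Delta \phi = \psi_{B_R}\mathbf{1}_{B_R}$, whose global $C^1$ regularity across $\partial B_R$ is guaranteed by the Neumann boundary condition on $\psi_{B_R}$.

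For each $R > 0$, I test $\mathcal{G}(g)$ against the truncation $h_R := \langle g\rangle_{B_R}\mathbf{1}_{B_R} + g\,\mathbf{1}_{B_R^c}$. Because $g$ is radial decreasing, $g \leq g(R) \leq \langle g\rangle_{B_R}$ on $B_R^c$, so $\|h_R\|_\infty = \langle g\rangle_{B_R}$; and $g - h_R = (g - \langle g\rangle_{B_R})\mathbf{1}_{B_R}$ is zero-mean and supported in $B_R$, so the lemma yields
\[
\mathcal{G}(g) \leq \frac{1}{2\mu_1(B_R)}\int_{B_R}(g - \langle g\rangle_{B_R})^2 \, dx + \langle g\rangle_{B_R}.
\]
Minimizing over $R > 0$ (and using the scaling $g \mapsto \lambda^{n+2}g(\lambda \cdot)$ to normalize $\|g\|_2 = 1$) produces $\mathcal{G}(g) \leq L_n \|g\|_2^{(2n+4)/(n+4)}$, with equality only when (i) $g$ is supported in $B_R$, (ii) $g - \langle g\rangle_{B_R}$ is proportional to $\psi_{B_R}$ on $B_R$, and (iii) the parameter $R$ saturates the scaling relation tying $R$ to $\|g\|_2$. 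Combined with the normalization condition (2) of the theorem --- which encodes the optimality condition $\|\phi\|_1 = 1$ coming from the subdifferential of $\|\cdot\|_\infty$ --- these force $g$ to equal a rescaling and translate of $\widehat{g}$. Evaluating the bound at $\widehat{g}$ and simplifying via $\|\widehat{g}\|_2^2 = \langle\widehat{g}\rangle^2|B| + \|\psi\|_{L^2(B)}^2$ and $\langle\widehat{g}\rangle|B| = \mu_1$ (the latter forced by the continuity of $\widehat{g}$ at $\partial B$, which is itself a consequence of (i)--(iii)) recovers \eqref{lndef}.

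The main obstacle is the last step: executing the joint optimization in $R$ while tracking all three equality conditions simultaneously, and verifying that the resulting sharp constant matches \eqref{lndef} exactly rather than in some implicit form. A secondary technical point concerns the low-dimensional cases $n = 1, 2$, where the $H^{-1}$ norm requires restriction to the weighted dual pairs in \eqref{Ydef}; the argument is unchanged in spirit, but the integrability and finiteness of each term must be checked in the appropriate weighted spaces.
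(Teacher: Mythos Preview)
Your proposal has two genuine gaps.

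\textbf{The rearrangement reduction is incorrect.} The paper explicitly notes (at the start of Section~\ref{sec5}) that $\mathcal{G}$ is \emph{not} monotone under symmetric decreasing rearrangement, and must reduce to radial optimizers indirectly via duality with the primal Nash inequality and Faber--Krahn. Your Riesz argument fails on several counts: the function $g-h$ has mean zero and is therefore not nonnegative, so Riesz does not apply; even for nonnegative functions Riesz gives the opposite sign (the $H^{-1}$ norm \emph{increases} under rearrangement, since the Green's kernel is positive and radially decreasing); and in any case $(g-h)^* \neq g^* - h^*$. Moreover, even if one could show $\mathcal{G}(g^*) \leq \mathcal{G}(g)$, that is the wrong direction for reducing a \emph{maximization} problem to radial competitors: you would need $\mathcal{G}(g^*) \geq \mathcal{G}(g)$.

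\textbf{The main step proves the wrong thing.} Your key lemma and test function $h_R$ are correct, and they yield
\[
\mathcal{G}(g) \leq \inf_{R>0}\left\{\frac{1}{2\mu_1(B_R)}\int_{B_R}\bigl(g-\langle g\rangle_{B_R}\bigr)^2\,dx + \langle g\rangle_{B_R}\right\} =: F(g)
\]
for radial decreasing $g$. But this does not prove $\mathcal{G}(g) \leq L_n\|g\|_2^{(2n+4)/(n+4)}$; it only replaces $\mathcal{G}$ by the new functional $F$. To conclude, you would need $F(g) \leq L_n\|g\|_2^{(2n+4)/(n+4)}$ for \emph{every} radial decreasing $g$, and this is a nontrivial inequality that you nowhere address---it does not follow from ``minimizing over $R$ and using scaling.'' Indeed, for a generic $g$ the optimal $h$ in $\mathcal{G}(g)$ is \emph{not} of the form $h_R$, so the bound $\mathcal{G}(g) \leq F(g)$ is strict, and there is no reason the slack should be exactly enough to close the gap. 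What your argument does establish is that $\mathcal{G}(\widehat g) = F(\widehat g) = L_n\|\widehat g\|_2^{(2n+4)/(n+4)}$, i.e., that $\widehat g$ saturates the inequality \emph{if} the inequality holds; it does not establish the inequality itself.

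The paper's route is quite different: it first develops the full theory of the infimal convolution (existence, uniqueness, and structure of the optimal $h$ for fixed $g$, Theorems~\ref{thm2} and \ref{main2}), then invokes existence and radiality of Nash optimizers on the primal side to deduce that the dual problem has a radial decreasing optimizer, and finally derives the Euler--Lagrange equation $C(-\Delta)g = g - h_g$ for that optimizer, from which the Neumann eigenfunction characterization and the boundary conditions emerge.
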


Theorem \ref{main1} is proved in Section \ref{sec5}, which provides 
further information on the optimizers.

The first step is to prove the existence and uniqueness of an optimizing 
$h$ in the functional ${\mathcal G}$ defined in \eqref{Gdef}

 \eqref{nash8A}.
It will be convenient to split this optimization into two steps: 
For $c> 0$, define
\begin{equation}\label{nash9}
A_g(c)  = \inf_{h\in L^2(\R^n)}\left\{  \frac12 \|  
(-\Delta)^{-1/2}(g-h)\|^2_{2} \ :\  \|h\|_\infty \leq c \ \right\} .
\end{equation}
Then (\ref{nash8A}) can be written as
\begin{equation}\label{nash8B}
L_n\|g\|_2^{\frac{2n+4}{n+4}}  \geq \inf_{ c>0}\left\{   c +A_g(c) \ \right\} \ .
\end{equation}
The right side of (\ref{nash8B}) is the infimal  convolution functional that  we study in the
 next section,
with the goal of  rendering this functional more explicit by showing that an optimal $h$  
in (\ref{nash9}) and $c$ in (\ref{nash8B}) exist, and by giving  a
reasonably explicit 
determination of them in terms of $g$. Our main results are contained in 
Theorems~\ref{thm2} and Theorem~\ref{main2}
The information provided by Theorem~\ref{main2} on the optimizing $h$
is explicit enough that it is the basis of a direct determination of the 
sharp constant $L_n$ and all of the optimizers for (\ref{nash9}), 
and, hence, {\it for the original Nash inequality!} We also obtain  
a new weighted generalization, proved in Theorem~\ref{thm3}. 

\begin{remark}\label{scaleinv}
The scale  invariance  of the inequality (\ref{nash8A}) will be useful later on. 
Let $\lambda > 0$ be a scaling parameter and, given $f\in Y$, define
\begin{equation}\label{scaleinv1}
f_\lambda(x) = \lambda^{n+2} f(\lambda x)\ .
\end{equation}
One readily  checks that
$$\\| (-\Delta)^{-1/2}(f_\lambda - h_\lambda)\|_2^2  = \lambda^{n+2} 
\|(-\Delta)^{-1/2}(f - h)\|_2^2
\quad{\rm and}\quad \|h_\lambda\|_\infty  = \lambda^{n+2}\|h\|_\infty\ ,$$
so that the two terms in the infimal convolution scale with the same power of $\lambda$. 
Another simple computation shows that 
$$\left(\|g_\lambda\|_2^2\right)^{(n+2)/(n+4)} = \lambda^{n+2} \left(\|g\|_2^2\right)^{(n+2)/(n+4)}\ .
$$
Therefore, if $g$ is an optimizer for   (\ref{nash8A}), and if, for this $g$, 
there exists an optimal $h$ in the  infimal convolution in 
 (\ref{nash8A}), then for all $\lambda >0$, $g_\lambda$ is an 
optimizer 
for (\ref{nash8A})  and $h_\lambda$ is an optimizer in the corresponding 
infimal convolution, in accordance with the final statement in Theorem~\ref{main1}. \hfill$\square$
 \end{remark}

We close this section by giving a   direct proof of (\ref{nash8A}) with 
a sub-optimal constant, and introducing some notation that will be used in 
the sequel. Part of this proof 
will be useful in the next section when we show that  an optimizing $h$ and 
$c$ exist. We give the details for $n\geq 2$; the adaptation to $n=1$ is 
left to the reader.

By the HLS inequality, $\| (-\Delta)^{-1/2} 
(|g|-c)_+\|^2_{2} \leq C_{n,1} \|(|g|-c)_+\|^2_{2n/(n+2)}$. To 
estimate the right side in terms of
$\|g\|_2^2$ and $c$, note that 
 for any $c>0$, 
\begin{equation}\label{cut}
\int_{\R^n} (|g|- c)_+^{2n/(n+2)}{\rm d} x \leq   \int_{\R^n}  
|g|^{2n/(n+2)} \mathbb{1}_{\{|g| \geq c\}}{\rm d} x \leq S
\|g\|_2^{n/(n+2)}\left(|\{ |g| \geq c\}|\right)^{2/(n+2)}\ ,
\end{equation}
where $|\{ |g|\geq c\}|$ denote the measure of the set $\{ x\ : |g|(x) \geq 
c\}$.
By Chebychev's inequality, $|\{ |g| \geq c\}| \leq \|g\|_2^2 c^{-2}$, and 
hence 
$$\|(|g|-c)_+\|_{2n/(n+2)}^2 \leq (\|g\|_2^2)^{\frac{n+2}{n}} c^{-4/n}\ .$$
For the set on which $|g|\geq c$ we take $h 
= c \, {\rm sgn}\, (g)$.
For the set on which $|g|<c$ we can take $h=g$, whence $g-h=0$. 
Altogether, with this choice of $h$, 
${\displaystyle A_g(c) \leq C_{n,1}  (\|g\|_2^2)^{\frac{n+2}{n}} c^{-4/n}}$.
and consequently, 
$$
\inf_{c>0} \left\{  c+   A_g(c)  \ \right\}   \leq \inf_{c>0}\left\{  c+  C_{n,1}  \|g\|_2^{n+2} c^{-4/n}  \ \right\}  = K_n  \|g\|_2^{\frac{2n+4}{n+4}}
$$
for an explicitly computable constant $K_n$ depending only on $n$ and $C_{n,1}$.   For $n=1,2$, the contruction must be modified to ensure that $\int_{\R^n}(g- h){\rm d}x =0$; this is easly done. 
By duality, this gives a proof of the Nash inequality via the HLS inequality, but not with the sharp constant. The results of the next section provide more information.

\section{Optimizers for the infimal convolution}\label{sec3}

Our goal in this section is to show the existence of optimizers  $h$ 
{\em for a fixed } $g$ in the infimal convolutions functional 
\begin{equation}\label{nash8X}
{\mathcal G}(g) = \inf_{ h\in Y}\left\{  
\frac12 \| (-\Delta)^{-1/2}(g-h)\|_2^2 +  \|h\|_\infty  
\ \right\} \ .
\end{equation}
and determine their form as explicitly as possible. Our main results are Theorem~\ref{thm2} and Theorem~\ref{main2}.

\begin{lm}[\bf Existence and uniqueness of minimizers of 
$A_{g}(c)$]\label{thm1} Let  $g\in Y$, with $Y$ defined in (\ref{Ydef}), 
let $c\in (0,\infty)$, and let $A_g(c)$ be defined as in (\ref{nash9}). 
Then there exists a unique $L^{2n/(n+2)}(\R^n)$ function $h$ 
satisfying $\|h\|_\infty \leq c$ and
$$A_g(c) = \frac12 \| (-\Delta)^{-1/2}(g-h)\|_2^2\ .$$
\end{lm}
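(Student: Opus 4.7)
The approach is the direct method, cast as a projection problem in a Hilbert space. The relevant space is $\mathcal{H} := \dot{H}^{-1}(\R^n)$, the completion of $C_c^\infty(\R^n)$ (or its zero-mean subspace in dimensions $n=1,2$) under the inner product $\langle f_1, f_2\rangle_{\mathcal{H}} = \langle f_1, (-\Delta)^{-1} f_2\rangle_{L^2}$, so that $\|f\|_{\mathcal{H}}^2 = \|(-\Delta)^{-1/2} f\|_2^2$. By the HLS inequality for $n \geq 3$, and by direct checks for the low-dimensional definitions in \eqref{Ydef}, the space $Y$ embeds continuously into $\mathcal{H}$, so both $g$ and every admissible $h$ lie in $\mathcal{H}$, and
$$
A_g(c) \;=\; \tfrac{1}{2}\,\mathrm{dist}_{\mathcal{H}}(g,\,K_c)^2, \qquad K_c := \{\, h \in Y : \|h\|_\infty \leq c\,\}.
$$

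The plan is to verify that $K_c$ is a nonempty, closed, convex subset of $\mathcal{H}$, after which the Hilbert-space projection theorem immediately furnishes the unique nearest point in $K_c$ to $g$, i.e.\ the required minimizer. Convexity is immediate; nonemptiness and finiteness of $A_g(c)$ both follow from the explicit truncation $h_0 = g\,\mathbb{1}_{\{|g|\leq c\}} + c\,\mathrm{sgn}(g)\,\mathbb{1}_{\{|g|>c\}}$ already constructed at the end of Section~\ref{sec2}, where $h_0 \in K_c$ and the HLS bound on $\|g - h_0\|_{\mathcal{H}}^2$ is carried out. Uniqueness follows at once from the strict convexity of $h \mapsto \|g - h\|_{\mathcal{H}}^2$.

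The main step, and the principal obstacle, is the closedness of $K_c$ in $\mathcal{H}$. Given $h_n \in K_c$ with $h_n \to h$ in $\mathcal{H}$, the bound $\|h_n\|_\infty \leq c$ together with Banach--Alaoglu yields a subsequence converging weakly-$\ast$ in $L^\infty$ to some $\tilde h$ with $\|\tilde h\|_\infty \leq c$. For any $\phi \in C_c^\infty(\R^n)$, the Cauchy--Schwarz bound $|\langle h_n - h, \phi\rangle_{L^2}| \leq \|h_n - h\|_{\mathcal{H}}\,\|\phi\|_{\dot{H}^1}$ forces $\int h_n \phi \to \int h \phi$, while weak-$\ast$ convergence forces $\int h_n \phi \to \int \tilde h \phi$. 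Hence $h = \tilde h$ almost everywhere, so $\|h\|_\infty \leq c$; the fact that $h$ lies in $Y$ (and not merely in $\mathcal{H} \cap L^\infty$) follows by combining the $L^\infty$ bound with the dimension-specific integrability built into \eqref{Ydef}, so that $h \in K_c$. The projection theorem then completes the proof, and one expects the first-order (Euler--Lagrange) conditions at the projection to be the natural starting point for the more quantitative statements in Theorem~\ref{thm2} and Theorem~\ref{main2}.
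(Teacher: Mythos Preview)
Your proof is correct and follows essentially the same route as the paper: recast the problem as projection onto a nonempty closed convex set in the Hilbert space $H^{-1}(\R^n)$, with closedness verified by testing against smooth compactly supported functions. The paper works with the translated set $\mathcal{K}=\{g-h:\|h\|_\infty\le c\}$ and tests directly via $\langle(-\Delta)\rho,\,\cdot\,\rangle_{H^{-1}}$ rather than passing through Banach--Alaoglu, but the argument is otherwise the same.
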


\begin{proof}  Fix $g\in Y$. Let $\mathcal{K}$ denote the set  of functions $f\in H^{-1}(\R^n)$ such that $h := g-f$ satisfies $\|h\|_\infty \leq c$. This  set
$\mathcal{K}$ is non-empty  since, for $n\geq 3$, it contains $f := g - \max\{\min\{g(x),c\},-c\}$. For $n=1,2$, a small modification is needed to ensure that
$\inf_{\R^n}f{\rm d}x = 0$, and that $f\in H^{-1}(\R^n)$: One may increase 
the size of the set on which $|h| = c$. 

The set $\mathcal{K}$ is evidently convex. To see that it is closed in 
$H^{-1}(\R^n)$, let 
$\rho$ be a positive, compactly supported $C^\infty$ function with $\int_{\R^n}\rho{\rm d}x =1$.  Then $(-\Delta)\rho(x) \in H^{-1}(\R^n)$ (with $\|(-\Delta)\rho\|_{H^{-1}} = \|\nabla \rho\|_2$).
Let $\{f_n\}$ be any convergent sequence in $\mathcal{K}$ with limit $f\in H^{-1}(\R^n)$.  Then
\begin{multline*}
\int_{\R^d} \rho(x) [g(x) - f_n(x)]{\rm }x = \langle (-\Delta)\rho, g-f_n\rangle_{H^{-1}}  \to\\   \langle (-\Delta)\rho, g-f \rangle_{H^{-1}}  =  \int_{\R^d} \rho(x) [g(x) - f(x)]{\rm }x\ .\end{multline*}
It follows that, with $h = g-f$ and $h_n = g-f_n$,
${\displaystyle  \int_{\R^d} \rho h {\rm d}x =  \lim_{n\to\infty} \int_{\R^d} \rho h_n {\rm d}x \leq c}$
Since this is true for all such $\rho$,  $\|h\|_\infty \leq c$.    Since  $H^{-1}(\R^n)$ is a Hilbert space, 
the  {\em projection lemma} \cite[Lemma  2.8]{LL}, says that the closed, non-empty convex set $\mathcal{K}$ contains a unique element $f = g-h$ of minimal norm. 
\end{proof}

\begin{thm}[\bf Properties of the minimizer of $A_{g}(c)$]\label{thm2}  Let 
 $g\in Y$, with $Y$ defined in (\ref{Ydef}), let $c\in (0,\infty)$, and let 
$A_g(c)$ be defined as in (\ref{nash9}), and let $h$ be the optimizer in 
(\ref{nash9}). Then:

\smallskip
\noindent{\it (1)}  For $n\geq 3$, ${\displaystyle \varphi(x) := (-\Delta)^{-1}(g-h)(x)= 0}$ for all $x$ such that $|h(x)| < c$.   For $n=1,2$, 
$\varphi$ is constant on this set. 

\smallskip
\noindent{\it (2)} On the set $\{ x\ : |h(x)| < c\}$, $h(x) = g(x)$.  Furthermore $g-h$ is integrable and $\int_{\R^n}(g-h){\rm d}x = 0$.

\smallskip

Under the  further assumption that $g$ is non-negative,

\smallskip
\noindent{\it (3)}  The function $h$ is non-negative, and on the set $\{ x\ : g(x)\geq c\}$, $h(x) = c$. 
\end{thm}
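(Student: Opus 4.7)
The plan is to derive (1)--(3) from the first-order variational condition for the strictly convex quadratic $F(h):=\tfrac12\|(-\Delta)^{-1/2}(g-h)\|_2^2$, bootstrapped by elliptic regularity and a maximum-principle argument for the potential $\varphi$. For (1), note that the $L^2$-gradient of $F$ at $h$ is $-\varphi$. For any $\delta>0$ and any $\psi\in C_0^\infty(\R^n)$ supported in $E_\delta:=\{|h|\le c-\delta\}$ (and, when $n=1,2$, with $\int\psi\,\dx=0$ so that $h+t\psi$ stays in $Y$), the function $h+t\psi$ is admissible for all small $|t|$; optimality of $h$ therefore forces $-\int\varphi\psi\,\dx=0$ for both signs of $t$. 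Since the admissible $\psi$ form a dense family of test functions, this gives $\varphi=0$ a.e.\ on $E_\delta$ when $n\ge 3$, and $\varphi$ equal to a single constant a.e.\ on $E_\delta$ when $n=1,2$; letting $\delta\downarrow 0$ completes (1).

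For (2), first upgrade the regularity of $\varphi$: since $-\Delta\varphi=g-h$ has right-hand side locally in $L^p$ for some $p>1$, classical elliptic regularity gives $\varphi\in W^{2,p}_{\rm loc}$. Apply the Stampacchia lemma twice: because $\varphi$ is (a.e.) constant on $E:=\{|h|<c\}$, $\nabla\varphi=0$ a.e.\ on $E$; applying the same lemma componentwise to each $\partial_i\varphi\in W^{1,p}_{\rm loc}$ (which vanishes a.e.\ on $E$) yields $\partial_i\partial_j\varphi=0$ a.e.\ on $E$, hence $\Delta\varphi=0$ a.e.\ on $E$, and so $g-h=-\Delta\varphi=0$ a.e.\ on $\{|h|<c\}$. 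The support of $g-h$ is then contained in $\{|h|=c\}$, which must have finite measure (otherwise $|g-h|\ge c/2$ on an infinite-measure subset, precluding $\|g-h\|_{H^{-1}}<\infty$); H\"older's inequality with $g\in L^p_{\rm loc}$ then gives $g-h\in L^1$. For $n=1,2$, $\int(g-h)\,\dx=0$ is built into the definition of $Y$; for $n\ge 3$, the Newtonian-potential asymptotics $\varphi(x)=K_n\bigl(\int(g-h)\,\dy\bigr)|x|^{2-n}+o(|x|^{2-n})$ as $|x|\to\infty$, evaluated along sequences $x_k\to\infty$ lying in the unbounded set $\{|h|<c\}$ on which $\varphi\equiv 0$, force $\int(g-h)\,\dx=0$.

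For (3), assume $g\ge 0$. The subdifferential version of the first-order condition in (1) also gives $\varphi\ge 0$ on $\{h=c\}$ and $\varphi\le 0$ on $\{h=-c\}$. I would then prove $\varphi\ge 0$ everywhere by a maximum-principle comparison: if $\varphi<0$ on some open component $U$, then $U\subseteq\{h=-c\}$ and $-\Delta\varphi=g+c\ge c>0$ on $U$, with $\varphi=0$ on $\partial U$ and $\varphi\to 0$ at infinity; comparing with the solution $v\ge 0$ of $-\Delta v=c$ on $U$ with zero boundary data, the difference $\varphi-v$ is superharmonic with vanishing boundary data, so $\varphi\ge v\ge 0$ in $U$, a contradiction. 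Hence $\varphi\ge 0$ throughout, which combined with $\varphi\le 0$ on $\{h=-c\}$ gives $\varphi\equiv 0$ there; the Stampacchia argument from (2) then forces $g+c=-\Delta\varphi=0$ a.e.\ on $\{h=-c\}$, impossible since $g+c\ge c>0$, so $|\{h=-c\}|=0$. Since $h=g\ge 0$ on $\{|h|<c\}$ by part (2) and $h=c>0$ on $\{h=c\}$, we obtain $h\ge 0$. Finally, if $g(x)\ge c$ and $|h(x)|<c$, part (2) would force $h(x)=g(x)\ge c$, contradicting $|h(x)|<c$; hence $|h|=c$ on $\{g\ge c\}$, and combined with $h\ge 0$ this gives $h=c$ there.

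The main technical obstacle throughout is the Stampacchia-based passage from ``$\varphi$ constant a.e.\ on a merely measurable set $E$'' to ``$\Delta\varphi=0$ a.e.\ on $E$'', which requires iterating the lemma in the correct Sobolev class; a secondary difficulty is the maximum-principle comparison in (3) for possibly unbounded components $U$, where the boundary condition at infinity must be read off from the $L^2$ decay of $\nabla\varphi$ coming from $g-h\in H^{-1}(\R^n)$.
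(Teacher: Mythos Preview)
Your arguments for (1) and (2) are essentially the paper's, with one cosmetic difference: where you iterate the Stampacchia lemma to conclude $\Delta\varphi=0$ a.e.\ on $\{|h|<c\}$, the paper simply cites the ``No Flat-Spots'' theorem of Frank--Lieb, whose proof is precisely your Stampacchia iteration. One small error: your justification that $\{|h|=c\}$ has finite measure is wrong --- $|h|=c$ does not force $|g-h|\ge c/2$, since $g$ could equal $c$ on that set. The correct (and simpler) argument is that $h$ lies in $Y$; for $n\ge 3$ this means $h\in L^{2n/(n+2)}$, so Chebyshev gives $|\{|h|=c\}|<\infty$ immediately.

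For (3) you and the paper take genuinely different routes. You work directly with the potential: the KKT conditions give $\varphi\ge 0$ on $\{h=c\}$ and $\varphi\le 0$ on $\{h=-c\}$, and then a maximum-principle argument rules out $\{\varphi<0\}$, forcing $|\{h=-c\}|=0$. This is correct (noting that $\{h=-c\}$ has finite measure by (2) and $\varphi\to 0$ at infinity for $n\ge 3$ since $g-h$ is integrable, which handles your ``secondary difficulty''), but it requires the regularity and sign analysis you describe. The paper instead uses a relaxation trick: it drops the lower constraint and minimizes only over $h\le c$. For the relaxed minimizer the same first-variation argument gives $h=g\ge 0$ wherever $h<c$, so $h\ge 0$ automatically; hence the relaxed minimizer already satisfies $|h|\le c$ and, by uniqueness, coincides with the original one. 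This avoids the maximum principle entirely and is shorter, though your approach has the virtue of yielding the sign information $\varphi\ge 0$ directly, which is used later in the paper anyway.
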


\begin{proof} For $\tilde h\in Y$, define
${\displaystyle {\mathcal G}(\tilde h) =  \frac12 \langle g-\tilde h \, ,\,  (-\Delta)^{-1}g-\tilde h\rangle_{L^2}}$.
Then
$$\varphi(x) = (-\Delta)^{-1}(h-g)(x) = \frac{\delta{\mathcal{G}}}{\delta h}(x)\ ,$$
the first variation of ${\mathcal G}$ at $\tilde h= h$.   On the set $\{x\:\ 
|h(x)| < c\}$, the first variation must vanish for $n\geq 3$.  If $n=1.2$ 
the only allowed variations $\delta h$ in $h$ are those with R$\int_{\R^n} 
\delta h {\rm d}x =0$.  In this case we conclude that $\varphi$ is constant 
on the set  $\{x\:\ |h(x)| < c\}$.
This proves {\it (1)}. 

By  part {\it (1)} and the  `No Flat-Spots'  Theorem  of Frank and Lieb \cite{FL}
$\Delta \varphi = 0$ on the set $\{x\ :\ \varphi(x) = 0\}$. This implies that $g= h$ on this set, 
which by {\it (1)}, 
includes the set $\{x\:\ |h(x)| < c\}$.  Since $h\in L^{2n/(n+2)}$, the set  $\{x\:\ |h(x)| = c\}$ 
has finite measure, and thus $g- h$ is integrable. 
If the integral were not zero, it would be impossible for $\varphi$ to 
vanish outside a set of finite measure.  This proves {\it (2)}. 

For the rest, suppose that $g \geq 0$, and  relax the variational problem for $A_g(c)$: Minimize the functional ${\mathcal G}$ over all $h$ satisfying only $h(x) \leq c$, and not $|h(x)| \leq c$. 

On the set where the minimizing $h$ satisfies $h(x) < c$, it must be the case that $\varphi(x) = 0$, and 
then as before, $h=g$ on this set. Since $g\geq 0$, it is  possible that $h$ takes on negative 
values. Thus $h \geq 0$, the the constraint $h \leq c$ has the same force as the constraint $|h| \leq c$.  Since on the set $\{ x\ : g(x)\geq c\}$, $h= g$ is impossible, it must be that $h(x) = \pm c$ for almost every $x$ in this set. Since $h$ is non-negative, this proves {\it (3)}. 
\end{proof}

\begin{remark}[\bf On the positivity assumption in {\it (3)} of 
Theorem~\ref{thm2}]  If $g$ takes on both signs, 
 the optimizing $h$ {\em need not} satisfy $h = c$ on $\{g \geq c\}$ and $h 
= -c$ on $\{g \leq -c\}$, because the potential $\varphi$ is depends on the 
values of $g$ and $h$ in a non-local manner. 
For example, consider  $0 < c< 1$, and
$$
g(x) = 
\begin{cases}
\phantom{-}0, & |x|> 1\\
-a, &  1 \geq |x| > 1/2\\
\phantom{-}1, & 1/2 \geq |x|
\end{cases}
$$
where $a$ is a large positive number.  Then the potential $\varphi_g$ will be non-positive everywhere, which means that $h(x) = c$  on the set where
$c < |g(x)|$, which is the entire support of $g$. 
\end{remark}

We now turn to the determination of the optimal $c$ in (\ref{nash8B}).  For any  $c \geq 0$, let $\varphi_{(c)}$ denote the potential $\varphi_{(c)} = (-\Delta)^{-1} (g- h_c)$. 

\begin{lm}[\bf Properties of $A_g(c)$]\label{strictconv} For all $g\in Y$, 
$A_g(c)$ is a strictly 
convex function of $c$ on the interval $(0,\|g\|_\infty)$.
Moreover, the map $c\mapsto  \varphi_{(c)}$ is continuous into $L^1(\R^n)$ on $(0,\|g\|_\infty)$.
\end{lm}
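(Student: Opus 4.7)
The plan is to address the two assertions separately: strict convexity of $A_g$ first, then continuity of $c \mapsto \varphi_{(c)}$.

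For strict convexity, fix distinct $c_0, c_1 \in (0, \|g\|_\infty)$ with corresponding unique minimizers $h_0, h_1$ from Lemma~\ref{thm1}. The first step is to show $h_0 \neq h_1$. If they coincide, say $h_0 = h_1 = h$, then $\|h\|_\infty \leq \min\{c_0, c_1\} < \max\{c_0, c_1\}$, so the set $\{|h| < \max\{c_0, c_1\}\}$ has full measure; applying Theorem~\ref{thm2}(2) to $h$ as the minimizer at $\max\{c_0, c_1\}$ then forces $g = h$ a.e., giving $\|g\|_\infty \leq \min\{c_0, c_1\} < \|g\|_\infty$, a contradiction. Next, for $t \in (0,1)$, the convex combination $h_t := (1-t)h_0 + t h_1$ satisfies $\|h_t\|_\infty \leq c_t := (1-t)c_0 + t c_1$, so it is admissible at $c_t$. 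The parallelogram identity in the Hilbert space $H^{-1}$ yields
\begin{equation*}
\tfrac12\|(-\Delta)^{-1/2}(g-h_t)\|_2^2 = (1-t)A_g(c_0) + tA_g(c_1) - \tfrac{t(1-t)}{2}\|(-\Delta)^{-1/2}(h_0 - h_1)\|_2^2,
\end{equation*}
where the last term is strictly negative since $h_0 \neq h_1$. Thus $A_g(c_t) < (1-t)A_g(c_0) + tA_g(c_1)$.

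For continuity, convexity of $A_g$ on the open interval gives continuity there automatically. Fix $c \in (0, \|g\|_\infty)$ and a sequence $c_k \to c$. Then $\{g - h_{c_k}\}$ is bounded in the Hilbert space $H^{-1}$ (since $\|g-h_{c_k}\|_{H^{-1}}^2 = 2A_g(c_k)$ converges), and $\{h_{c_k}\}$ is eventually bounded by $c+1$ in $L^\infty$. Passing to a subsequence, $g - h_{c_k}$ converges weakly in $H^{-1}$ while $h_{c_k}$ converges weakly-$\ast$ in $L^\infty$ to some $h^\star$; the two limits agree as distributions and $\|h^\star\|_\infty \leq c$ by weak-$\ast$ lower semicontinuity. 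Lower semicontinuity of the $H^{-1}$-norm then shows $h^\star$ is admissible and optimal for $A_g(c)$, and uniqueness from Lemma~\ref{thm1} forces $h^\star = h_c$; the standard subsequence argument extends convergence to the full sequence. Combining weak convergence with norm convergence in $H^{-1}$ upgrades to strong $H^{-1}$-convergence, and applying the isometry $(-\Delta)^{-1}\colon H^{-1} \to H^1$ yields $\varphi_{(c_k)} \to \varphi_{(c)}$ strongly in $H^1$, hence in $L^2$.

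The main obstacle is the final upgrade from $L^2$- to $L^1$-convergence of the potentials. For $n \geq 3$, Theorem~\ref{thm2}(1) tells us that each $\varphi_{(c_k)}$ is supported on the finite-measure set $E_k := \{|h_{c_k}| = c_k\}$. The plan is to use the identity $h_{c_k} = g$ on $E_k^c$ together with the integrability of $g$ in $Y$ to establish a uniform bound on $|E_k|$ and tightness of the supports; combined with the uniform $L^{2n/(n-2)}$-bound coming from the Sobolev embedding of $H^1$ and the strong $L^2$-convergence, a Vitali-type equi-integrability argument then yields strong $L^1$-convergence. The cases $n = 1, 2$ require additional care since $\varphi_{(c_k)}$ is only constant---rather than zero---on $E_k^c$; one must work within the weighted pairing $Y$ and subtract off the (vanishing in the limit) constant before invoking the same strategy.
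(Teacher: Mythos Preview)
Your proof of strict convexity is correct and matches the paper's approach: both use a convex combination $h_t = (1-t)h_0 + th_1$ as a trial function at level $c_t$ and exploit the strict convexity of the $H^{-1}$ quadratic form. Your argument for $h_0 \neq h_1$ via Theorem~\ref{thm2}(2) is a valid alternative to the paper's more direct observation that $|h_{c_1}| = c_1 > c_0$ on the nonempty set $\{|g| > c_1\}$.

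For the continuity assertion your route is more circuitous than the paper's and has one imprecision. The paper does not pass through weak compactness at all: it simply reuses the parallelogram identity you already wrote down (at $t = 1/2$) to obtain the quantitative bound
\[
\tfrac14\|(-\Delta)^{-1/2}(h_{c_0}-h_{c_1})\|_2^2 \ \leq\ A_g(c_0) + A_g(c_1) - 2A_g(c_{1/2}),
\]
and the right side tends to zero as $c_0,c_1 \to c$ by the continuity of the convex function $A_g$. This gives strong $H^{-1}$ convergence of $h_c$ in one line. Your weak-compactness/identification/norm-convergence argument reaches the same conclusion correctly, but is longer and yields nothing extra.

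The imprecision is your claim ``strongly in $H^1$, hence in $L^2$.'' The isometry $(-\Delta)^{-1}$ maps $H^{-1}$ onto the \emph{homogeneous} space $\dot H^1$, and Sobolev then gives convergence in $L^{2n/(n-2)}$ (for $n\geq 3$), not in $L^2$; passing to $L^2$ already requires the same finite-support information that you flag as the obstacle for $L^1$. The paper's resolution of that obstacle is short: since each $\varphi_{(c')}$ is supported on the finite-measure set $\{|h_{c'}| = c'\}$, and these sets for $c'$ near $c$ sit inside a single finite-measure set, H\"older's inequality converts $L^{2n/(n-2)}$ convergence directly into $L^1$ convergence. Your Vitali sketch is headed in the right direction but can be replaced by this one-step argument.
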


\begin{proof}  Let $0 < c_0 < c_1 < \|g\|_\infty$, and let $\lambda \in 
(0,1)$. Define $c_\lambda = (1-\lambda)c_0 + \lambda c_1$.  For all $c\in 
(0,\|g\|_\infty)$, let $h_c$ denote the optimizing $h$ in the variational problem defining $A_g(c)$. Since $|h_{c_1}| = c_1$ on the set where $|g| > c_1$, while $\|h_{c_0}\|_\infty = c_0 < c_1$, $h_{c_0} \neq h_{c_1}$. 

Note that
$$\|(1-\lambda)h_{c_0} + \lambda h_{c_1}\|_\infty \leq (1-\lambda)c_0 + \lambda c_1 = c_\lambda\ ,$$
and, therefore, by the  strict convexity of $h \mapsto \langle g - h, 
(-\Delta)^{-1}(g-h)\rangle$, 
\begin{eqnarray}A_g(c_\lambda) &\leq& 
\frac12 \langle g-[(1-\lambda)h_{c_0} + \lambda h_{c_1}] \, ,\,  (-\Delta)^{-1}(g-[(1-\lambda)h_{c_0} + \lambda h_{c_1}])\rangle_{L^2}\nonumber\\
&<& 
(1-\lambda) \frac12 \langle g- h_{c_0}  \, ,\,  (-\Delta)^{-1}(g-h_{c_0} )\rangle_{L^2}  +
\lambda \frac12 \langle g- h_{c_1}  \, ,\,  (-\Delta)^{-1}(g-h_{c_1} )\rangle_{L^2} \nonumber\\
&=& (1-\lambda)A_g(c_0)  + \lambda A_g(c_1)\ .
\nonumber
\end{eqnarray}
Next, by the parallelogram law in the Hilbert space $H^{-1}(\R^n)$, and by
the convexity proved above,
$$ \| (-\Delta)^{-1/2}(h_{c_0} - h_{c_1})\|_2^2  \leq A_g(c_0) + A_g(c_1) - A_g(c_{1/2})\ .$$  
Since $A_g$ is convex, it is continuous, and, as $c_1$ and $c_0$ approach 
each other, 
$ \| (-\Delta)^{-1/2}(h_{c_0} - h_{c_1})\|_2^2$ tends to zero. This implies that $c\mapsto (-\Delta)^{-1/2}h_c$ is continuous into
$L^2$, and then, for $n\geq 3$,  by the HLS inequality, $c\mapsto \varphi_{(c)}$ is continuous into $L^{2n/(n-2)}$. 
Since for any $0 < c < c_0$, $\varphi_{(c_0)} - \varphi_{(c_1)}$ vanishes outside the set on which $h_c  =c$, and since this set has finite measure, the continuity into $L^1(\R^n)$ follows from the continuity into $L^{2n/(n-2)}(\R^n)$. 
\end{proof}

The strict convexity proved in Lemma~\ref{strictconv} shows that {\it there is a unique minimizing $c$ in (\ref{nash8B})}. 
To obtain an equation for this minimizing $c$, we proceed under the 
assumption that $g\geq 0$.

Define 
$E_c := \{ x\ :\ \varphi_{(c)}(x)>0\}$ and define $F_c = \{ x\ :\ h_c(x) = c\}$.  On the set $E_c$, $h_c = c$ 
since, otherwise, one could lower the value of the functional defining 
$A_g(c)$ by increasing $h$. 
By the optimality of $h_c$, $\varphi_{c} =0 $ on the 
complement of $F_c$. That is, $E_c = F_c$, which shows that $F_c$ is open. 

Note that for $c_0 < c_1$, $\varphi_{c_1} \leq  \varphi_{c_1}$ and $E_{c_1} \subset E_{c_2}$. 
By Lemma~\ref{strictconv}, the function $c\mapsto A_g(c)$ has left and 
right derivatives at each $c\in (0,\|g\|_\infty)$. 

\begin{lm}\label{difflem} For all $0 < c < \|g\|_\infty$, 
\begin{equation}\label{elc}
\lim_{s \downarrow 0}\frac{A_g(c) - A_g(c-s)}{s} \leq \int_{\R^n} \varphi_{(c)}{\rm d}x \leq 
\lim_{s \downarrow 0}\frac{A_g(c+s) - A_g(c)}{s}\ .
\end{equation}
\end{lm}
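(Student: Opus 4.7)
The lemma identifies $\int_{\R^n}\varphi_{(c)}{\rm d}x$ as belonging to the subdifferential of the convex function $c\mapsto A_g(c)$ at $c$; equivalently, it is a subgradient inequality sandwiched between the left and right derivatives (which exist by Lemma~\ref{strictconv}). My plan is to bound each one-sided difference quotient by constructing an explicit admissible competitor in the primal problem defining $A_g(c\pm s)$ and expanding the quadratic $H^{-1}$-functional to first order around $h_c$. The data in Theorem~\ref{thm2} — namely that $\varphi_{(c)}$ is supported on $F_c=\{|h_c|=c\}$, that $h_c=\pm c$ there with signs matched to $\varphi_{(c)}$ by the KKT conditions, and that $F_c$ has finite measure — makes the linear term collapse precisely to $\pm s\int_{\R^n}\varphi_{(c)}{\rm d}x$.

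For the right-hand bound, I use the bump perturbation $\tilde h := h_c + s\,\mathbf{1}_{F_c}\operatorname{sgn}(\varphi_{(c)})$ (for $g\geq 0$ this is simply $h_c + s\,\mathbf{1}_{F_c}$, since Theorem~\ref{thm2}(3) gives $h_c=c$ on $F_c$). By construction $\|\tilde h\|_\infty\leq c+s$, so $\tilde h$ is admissible for $A_g(c+s)$. Polarising the $H^{-1}$-norm and using $\varphi_{(c)}=(-\Delta)^{-1}(g-h_c)$ yields
\begin{equation*}
A_g(c+s)\;\leq\;\tfrac12\|(-\Delta)^{-1/2}(g-\tilde h)\|_2^2
\;=\;A_g(c)\;-\;s\int_{\R^n}\varphi_{(c)}\,{\rm d}x\;+\;\tfrac{s^2}{2}\|(-\Delta)^{-1/2}\mathbf{1}_{F_c}\|_2^2 .
\end{equation*}
Dividing by $s$ and passing to the limit extracts the inequality for the right derivative; the quadratic term is $O(s)$ because $F_c$ has finite measure (Theorem~\ref{thm2}(2)) and the HLS inequality controls $\|(-\Delta)^{-1/2}\mathbf{1}_{F_c}\|_2$.

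For the left-hand bound, I use the scaled competitor $\tilde h := (1-s/c)h_c$, which satisfies $\|\tilde h\|_\infty\leq c-s$ and is therefore admissible for $A_g(c-s)$. Expanding as before,
\begin{equation*}
A_g(c-s)\;\leq\;A_g(c)\;+\;\tfrac{s}{c}\langle h_c,\varphi_{(c)}\rangle_{L^2}\;+\;O(s^2),
\end{equation*}
and since $\varphi_{(c)}$ is supported on $F_c$ where $|h_c|=c$ with sign matching that of $\varphi_{(c)}$, the linear term is exactly $s\int_{\R^n}\varphi_{(c)}{\rm d}x$. This gives the claimed bound for the left derivative after dividing by $s$ and letting $s\downarrow 0$.

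The main obstacle is ensuring the quadratic remainder is well-defined and $o(1)$: one needs $\mathbf{1}_{F_c}$ (and $h_c$) to lie in $H^{-1}(\R^n)$. For $n\geq 3$ this is automatic from HLS because $F_c$ has finite measure. For $n=1,2$, $H^{-1}$ functions must have zero mean, so the perturbations must be adjusted to have vanishing integral; this can be done using the information from Theorem~\ref{thm2}(2) that $\int(g-h_c)\,{\rm d}x=0$ and by spreading the compensating mass over a large ball where $h_c$ vanishes, at the cost of a correction that is negligible in the limit $s\downarrow 0$. A secondary, less serious subtlety is that without the sign assumption on $g$ the function $\varphi_{(c)}$ may not have a definite sign, but the pointwise matching of signs between $h_c$ and $\varphi_{(c)}$ on $F_c$ from the KKT analysis ensures that the relevant inner products still reduce to $\int\varphi_{(c)}{\rm d}x$.
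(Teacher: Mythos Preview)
Your competitor computations are correct, but each one yields the \emph{opposite} inequality to the one you assign it. Plugging $\tilde h=h_c+s\,\mathbb{1}_{F_c}$ into the variational problem for $A_g(c+s)$ gives an \emph{upper} bound on $A_g(c+s)$, and after your (correct) polarisation one obtains
\[
\frac{A_g(c+s)-A_g(c)}{s}\ \leq\ -\int_{\R^n}\varphi_{(c)}\,{\rm d}x\ +\ O(s),
\]
i.e.\ an upper bound on the right derivative, whereas the lemma demands a \emph{lower} bound. (Note that \eqref{elc} as printed carries a sign slip: the Remark following the lemma, and its use in Lemma~\ref{optc}, show that the quantity sandwiched between the one-sided derivatives is $-\int\varphi_{(c)}$; since $A_g$ is decreasing and $\varphi_{(c)}\geq 0$, the inequality $\int\varphi_{(c)}\leq A_g'(c^+)$ is impossible except trivially.) Likewise your scaled competitor $(1-s/c)h_c$, being admissible for $A_g(c-s)$, yields $A_g'(c^-)\geq -\int\varphi_{(c)}$, again the reverse of what is needed. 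So neither step proves the inequality you claim for it.

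Your argument can be rescued, but only by an additional observation you do not make: the two reversed inequalities, combined with the convexity $A_g'(c^-)\leq A_g'(c^+)$ from Lemma~\ref{strictconv}, squeeze
\[
-\!\int_{\R^n}\varphi_{(c)}\,{\rm d}x\ \leq\ A_g'(c^-)\ \leq\ A_g'(c^+)\ \leq\ -\!\int_{\R^n}\varphi_{(c)}\,{\rm d}x,
\]
forcing equality throughout (and in fact proving differentiability of $A_g$, which is more than the lemma asserts). A secondary issue: for $n=1,2$ neither $\mathbb{1}_{F_c}$ nor $h_c$ lies in $H^{-1}$ (their integrals are nonzero), so both of your $O(s^2)$ remainders are infinite; the ``spread the compensating mass'' fix you sketch would have to be carried out in detail.

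The paper proceeds differently. Rather than build competitors from $h_c$ alone, it uses \emph{both} exact minimizers $h_c$ and $h_{c\pm s}$ to derive an identity,
\[
A_g(c+s)-A_g(c)\ =\ -s\int_{\R^n}\varphi_{(c+s)}\,{\rm d}x\ +\ \tfrac12\|(-\Delta)^{-1/2}(h_c-h_{c+s})\|_2^2,
\]
relying on the nesting $E_{c+s}\subset E_c$ to conclude $h_c-h_{c+s}=-s$ on the support of $\varphi_{(c+s)}$. Dropping the nonnegative remainder and invoking the monotonicity $\varphi_{(c+s)}\leq\varphi_{(c)}$ gives each one-sided inequality directly, in the correct direction. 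This works uniformly in all dimensions since $h_c-h_{c+s}=(g-h_{c+s})-(g-h_c)\in H^{-1}$ automatically.
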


\begin{proof} 
Let $0 < c < \|g\|_\infty$ and let $s>0$ be small enough so that $c+s < 
\|g\|_\infty$.  Then
\begin{eqnarray}
A_g(c+s) - A_g(c) &=& \frac12\langle g-h_{c+s}, \varphi_{(c+s)}\rangle -  \frac12\langle g-h_{c}, \varphi_{(c)}\rangle\nonumber\\
&=& \frac12\langle h_c-h_{c+s}, \varphi_{(c+s)}\rangle 
+ \frac12\langle g-h_{c}, \varphi_{(c+s)} -\varphi_{(c)}\rangle\ . \nonumber
\end{eqnarray}
By what has been explained above, $ h_c-h_{c+s} =-s$ on the set where $\varphi_{(c+s)}$ is non-zero,  Furthermore,
\begin{eqnarray}
\langle g-h_{c}, \varphi_{(c+s)} -\varphi_{(c)}\rangle &=&  \langle \varphi_{(c)} , h_c - h_{c+s}\rangle\nonumber\\
&=&  \langle \varphi_{(c+s)} , h_c - h_{c+s}\rangle\nonumber +  \|(-\Delta)^{-1/2}(h_c - h_{c+s})\|_2^2\nonumber
\end{eqnarray}
Altogether,
\begin{equation}\label{leftder}
A_g(c+s) - A_g(c)  = -s \int_{\R^n}\varphi_{c+s}{\rm d}x + \frac12 \|(-\Delta)^{-1/2}(h_c - h_{c+s})\|_2^2\ .
\end{equation}

Similar computations show that for $s>0$ and $c-s>0$,
\begin{equation}\label{rightder}
A_g(c) - A_g(c-s)  = -s \int_{\R^n}\varphi_{c}{\rm d}x - \frac12 \|(-\Delta)^{-1/2}(h_c - h_{c+s})\|_2^2\ .
\end{equation}
Now (\ref{elc}) follows from the monotonicity of $\varphi_{c}$. 
\end{proof}

\begin{remark}\label{diflem}  Lemma~\ref{difflem} shows that if $A_g$ is 
differentiable at $c$, then $A'_g(c) = -\int_{\R^n}\varphi_{c}{\rm d}x$;
by the convexity proved in Lemma~\ref{strictconv}, $A_g$ is 
differentiable almost everywhere.  Moreover, by the second part of
Lemma~\ref{strictconv}, $c\mapsto -\int_{\R^n}\varphi_{c}{\rm d}x$ is 
continuous.
\end{remark}

We now turn to the minimization problem in (\ref{nash8B}):

\begin{lm}[\bf Equation for $c$]\label{optc} For $g\in Y$, $g\geq 0$, here 
exists a  unique $c_0 \in (0,\|g\|_\infty)$ such that
\begin{equation}\label{gcond2}
\int_{\R^n} \varphi_{(c)}{\rm d}x =1\ ,
\end{equation}
and
$$c_0 + A_g(c_0) < c+ A_g(c)$$
for all $c \neq c_0$ in $(0,\|g\|_\infty)$. 
\end{lm}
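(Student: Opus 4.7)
The strategy is to minimize the strictly convex function $F(c) := c + A_g(c)$ on the interval $(0, \|g\|_\infty)$. By Lemma~\ref{strictconv}, $A_g$ is strictly convex, so $F$ has at most one minimizer, and by Remark~\ref{diflem} one has $F'(c) = 1 - \Psi(c)$ at almost every $c$, where
$$\Psi(c) := \int_{\R^n} \varphi_{(c)}(x)\,dx$$
is continuous on $(0,\|g\|_\infty)$. The pointwise monotonicity $\varphi_{(c_1)} \leq \varphi_{(c_0)}$ for $c_0 < c_1$ noted just before Lemma~\ref{difflem}, combined with the strict convexity of $A_g$ (which rules out $F'$ being constant on any subinterval), implies that $\Psi$ is strictly decreasing.

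The plan is then to establish the boundary limits
$$\lim_{c\uparrow \|g\|_\infty}\Psi(c) \;=\; 0 \qquad\text{and}\qquad \lim_{c\downarrow 0}\Psi(c) \;=\; +\infty.$$
The first direction is the easy one: taking $h = g\wedge c$ as an admissible competitor in \eqref{nash9} (with a minor adjustment for $n=1,2$ to enforce $\int (g-h)\,dx = 0$) gives $A_g(c) \leq \tfrac12\|(-\Delta)^{-1/2}(g-c)_+\|_2^2 \to 0$ by HLS (or its weighted analogue); hence $\|\nabla\varphi_{(c)}\|_2^2 = 2A_g(c)\to 0$, and combined with the fact that $\varphi_{(c)} \geq 0$ is supported on $\{h_c = c\}$, a set whose measure is bounded by $\|g\|_1/c$ via Theorem~\ref{thm2}(2), one concludes $\Psi(c)\to 0$ by Sobolev and Hölder.

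The second direction is the delicate one. By Theorem~\ref{thm2}(2), $\int h_c\,dx = \|g\|_1$ while $\|h_c\|_\infty \leq c$, so $|\mathrm{supp}(h_c)| \geq \|g\|_1/c \to \infty$; since $\varphi_{(c)}$ is the non-negative Newtonian potential of a mean-zero charge whose negative part is forced onto an arbitrarily large set, its total mass must diverge. An explicit computation in the radial decreasing case (say $g = \mathbf{1}_{B_1}$, where Newton's theorem gives closed forms) yields $\Psi(c) \asymp c^{-2/n}$, and this divergence can be transferred to general non-negative $g\in Y$ by symmetric decreasing rearrangement. Given these two limits, together with the strict monotonicity and continuity of $\Psi$, the intermediate value theorem furnishes a unique $c_0\in (0, \|g\|_\infty)$ with $\Psi(c_0) = 1$. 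The one-sided derivative inequalities of Lemma~\ref{difflem}, combined with continuity of $\Psi$ at $c_0$, then show that both one-sided derivatives of $F$ vanish at $c_0$, so $c_0$ is a critical point of the strictly convex $F$ and hence its unique minimizer, yielding $c_0 + A_g(c_0) < c + A_g(c)$ for every $c \neq c_0$ in $(0,\|g\|_\infty)$.

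The principal obstacle is the divergence $\Psi(c)\to +\infty$ as $c\downarrow 0$: monotonicity, continuity, and the right-endpoint limit all come essentially for free from the material already developed, whereas the left-endpoint divergence requires a genuine quantitative estimate coupling the integral identity $\int h_c\,dx = \|g\|_1$ with the bound $\|h_c\|_\infty \leq c$ to produce an $L^1$ lower bound on the potential that blows up. This is the essential input without which the infimum could fail to be attained in the interior.
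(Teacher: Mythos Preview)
Your overall strategy matches the paper's: show that $\Psi(c) := \int_{\R^n} \varphi_{(c)}\,{\rm d}x$ is continuous and strictly decreasing with the correct endpoint limits, apply the intermediate value theorem, and then invoke Lemma~\ref{difflem} to identify the minimizer. Your handling of the right endpoint and of the final minimization step is correct and in fact more detailed than the paper's.

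The gap is in your argument for $\Psi(c)\to\infty$ as $c\downarrow 0$, which you flag as the ``principal obstacle'' and propose to handle by an explicit radial computation followed by a transfer via symmetric decreasing rearrangement. That transfer is unjustified: the paper itself notes in Section~\ref{sec5} that $\mathcal{G}$ is \emph{not} monotone under rearrangement, and there is no comparison principle for $\Psi$ under $g\mapsto g^*$ either; without one, your argument reaches only radial decreasing $g$. In fact the paper disposes of this limit in one line by observing that $\Psi(c)\to\int_{\R^n}\varphi_g\,{\rm d}x = +\infty$. The point is that $\varphi_g - \varphi_{(c)} = (-\Delta)^{-1}h_c$ with $h_c\geq 0$, and since $\|h_c\|_\infty\leq c$ while $\|h_c\|_1 = \|g\|_1$ stays fixed, $(-\Delta)^{-1}h_c(x)\to 0$ at each fixed $x$ (for $n\geq 3$, split the Green's-function convolution at radius $R$: the near contribution is $O(cR^2)$, the far contribution is $O(R^{2-n}\|g\|_1)$). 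Thus $\varphi_{(c)}\uparrow\varphi_g$ pointwise and monotone convergence applies; the divergence of $\int_{\R^n}\varphi_g\,{\rm d}x$ holds simply because $g\geq 0$, $g\not\equiv 0$, and the full-space Green's function has infinite total mass. Your one-sentence heuristic about the negative charge being forced onto a large set was already the right idea; the radial-plus-rearrangement detour is both unnecessary and incomplete.
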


\begin{proof} Since $g\geq 0$, $\lim_{c\to 0}\int_{\R^n}\varphi_{(c)}{\rm 
d}x  = \int_{\R^n}\varphi_g(y){\rm d}y = \infty$. It is also clear that
$\lim_{c\to \|g\|_\infty} \int_{\R^n}\varphi_{(c)}{\rm d}x = 0$.  Lemma~\ref{strictconv} and  Lemma~\ref{difflem} show that the 
 function $c\mapsto 
\int_{\R^n}\varphi_{(c)}{\rm d}x$ is 
continuous and strictly monotone.   It follows that there exists a unique
$c_0 \in (0,\|g\|_\infty)$ such that 
\begin{equation}\label{gcond2}
\int_{\R^n} \varphi_{(c_0)}{\rm d}x =1\ .
\end{equation}
Then by Lemma~\ref{diflem},  the left derivative of $c+ A_g(c)$ is
non-positive at $c=c_0$, while the right derivative  is non-negative.  It follows that in this case
$c_0$ minimizes $c+ A_g(c)$. 
\end{proof}

In summary, we have proved  the following:

\begin{thm}[\bf Existence, uniqueness and propeties of 
optimizers of  $\mathcal{G}$]\label{main2} Let  $g\in Y$, $g\geq 0$. Then 
there is a unique $h\in Y$ such that for all $\widetilde h\in Y$,  
$\widetilde h \neq h$, 
$${\mathcal G}(g) =  \frac12 \| (-\Delta)^{-1/2}(g-h)\|_2^2 + \|h\|_\infty  <  \frac12 \| (-\Delta)^{-1/2}(g-\widetilde h)\|_2^2 + \|\widetilde h\|_\infty\ .$$
Moreover:

\smallskip
\noindent{\it (1)}  $h\geq 0$ and  $g-h$ is integrable with $\int_{\R^n}(g-h){\rm d}x = 0$.

\smallskip
\noindent{\it (2)}On the set $\{x\ : \ h(x) \neq \|h\|_\infty$, $h(x) = g$. and 
$\varphi_{g-h}(x) = 0$.

\smallskip
\noindent{\it (3)}  $\int_{\R^n}\varphi_{g-h}{\rm d}x =1$. 
\end{thm}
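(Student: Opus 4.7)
The plan is to combine the two-stage optimization that the preceding lemmas have already set up: the inner minimization for fixed $c = \|h\|_\infty$ (handled by Lemma~\ref{thm1} and Theorem~\ref{thm2}) and the outer minimization over $c$ (handled by Lemma~\ref{optc}). Writing
\[
\mathcal{G}(g) \;=\; \inf_{c>0}\bigl\{\, c + A_g(c)\,\bigr\},
\]
a candidate optimizer is produced by first selecting the unique $c_0\in(0,\|g\|_\infty)$ given by Lemma~\ref{optc}, and then setting $h := h_{c_0}$, the unique inner minimizer supplied by Lemma~\ref{thm1}. This $h$ satisfies $\|h\|_\infty \le c_0$, and in fact $\|h\|_\infty = c_0$ because any smaller sup norm would contradict the uniqueness of $c_0$ as the minimizer of $c + A_g(c)$ together with the strict convexity of $A_g$ (Lemma~\ref{strictconv}).

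Next I would establish uniqueness directly. Let $\widetilde h\in Y$ be any competitor and put $\widetilde c := \|\widetilde h\|_\infty$. If $\widetilde c \ge \|g\|_\infty$ the functional value is already at least $\|g\|_\infty > c_0 + A_g(c_0)$, so we may assume $\widetilde c \in (0,\|g\|_\infty)$. Then
\[
\tfrac12\|(-\Delta)^{-1/2}(g-\widetilde h)\|_2^2 + \|\widetilde h\|_\infty \;\ge\; A_g(\widetilde c) + \widetilde c \;\ge\; A_g(c_0) + c_0,
\]
with the first inequality strict unless $\widetilde h = h_{\widetilde c}$ (by the uniqueness statement in Lemma~\ref{thm1}) and the second strict unless $\widetilde c = c_0$ (by the strict convexity and unique minimizer statement in Lemma~\ref{optc}). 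Thus the overall inequality is strict whenever $\widetilde h \ne h_{c_0}=h$, which gives the asserted strict inequality in the theorem.

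Finally, the three listed properties are read off from what has already been proved for the inner and outer minimizers. Since $g \ge 0$, Theorem~\ref{thm2}{\it (3)} gives $h\ge 0$, and Theorem~\ref{thm2}{\it (2)} gives the integrability of $g-h$ with $\int_{\R^n}(g-h)\dd x = 0$; this is~{\it (1)}. On the set $\{x : h(x) \ne \|h\|_\infty\} = \{x : |h(x)| < c_0\}$, Theorem~\ref{thm2}{\it (1)}--{\it (2)} give $h(x)=g(x)$ and $\varphi_{g-h}(x) = 0$ (for $n\ge 3$ directly, and for $n=1,2$ by subtracting the constant value of $\varphi_{g-h}$ on this set, which is permitted since we work modulo constants in those dimensions and the potential is normalized via the integrability of $g-h$); this is~{\it (2)}. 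Property~{\it (3)} is precisely equation~\eqref{gcond2} of Lemma~\ref{optc} applied at $c=c_0$.

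The only real subtlety I anticipate is the interplay between the $n\ge 3$ case and the low-dimensional cases $n=1,2$: in those dimensions the first-variation argument of Theorem~\ref{thm2} only determines $\varphi_{g-h}$ up to an additive constant on $\{|h|<c_0\}$, and one must use the zero-mean condition $\int(g-h)\dd x = 0$ together with the growth of the Green's function to pin down the constant and make sense of the statement $\varphi_{g-h}=0$. This bookkeeping is the main thing to be careful about; everything else is a straightforward assembly of the previously established lemmas.
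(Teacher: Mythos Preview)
Your proposal is correct and matches the paper's approach exactly: the paper presents Theorem~\ref{main2} with the phrase ``In summary, we have proved the following,'' giving no separate proof, so the theorem is intended precisely as the assembly of Lemma~\ref{thm1}, Theorem~\ref{thm2}, Lemma~\ref{strictconv}, and Lemma~\ref{optc} that you spell out. Your write-up is in fact more explicit than the paper's, since you verify $\|h\|_\infty = c_0$ and carry out the two-step uniqueness argument, and your remark on the $n=1,2$ normalization of $\varphi_{g-h}$ addresses a point the paper leaves implicit.
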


\subsection{The case in which $g$ is radially symmetric and 
monotone}\label{sec4} 

Now consider the case in which $g$ is non-negative, radially symmetric and 
monotone; i.e., the case in which $g = g^*$, its radially symmetric 
decreasing rearrangement. In this case, Theorem~\ref{main2} gives us an 
explicit description of the optimizing $h$.   For some $c \leq \|g\|_\infty$,
$h = h_c$
where
\begin{equation}\label{optimalh}
h_c(x) = \begin{cases} c & |x| \leq r(c)\\ g(x) & |x| > r(c)\ , \end{cases}
\end{equation}
and where $r(c)$ is uniquely determined (for fixed $c$) by the requirement that 
$$\int_{\R^n}(g- h_c){\rm d}x = 0\ ,$$
and then $c$ itself is fixed by the requirement that
$$\int_{|x|\leq r(c)} \int_{|y| \leq r(c)} G_n(x,y)(g(y) -c){\rm d}y{\rm d}x = 1\ .$$

\section{Proof of Theorem \ref{main1} concerning the dual \\ Nash 
inequality}\label{sec5}

In the previous sections we have determined the optimal $h$ for a given 
function $g$. 
We are now ready to determine the optimizers $g$ for the dual Nash 
inequality (\ref{nash8B})  that will determine the sharp constant $L_n$
in \eqref{nash8A}.  That is, we shall now determine
$$\sup_{g\in L^2}\frac{{\mathcal G}(g)}{\|g\|_{2}^{\frac{2n+4}{n+4}} }\ .$$

Our first goal is to reduce to the radial decreasing case, but ${\mathcal G}$ is not monotone under rearrangement, so we must make an indirect argument. 
By duality there is a one to one correspondence between optimizers for the 
Nash inequality and its dual. Moreover, the functional gradient of a 
power of the $L^2 $ norm evaluated at $f$ is a multiple of $f$. Therefore, 
every optimizer for the dual Nash inequality is a multiple of 
the Nash inequality and {\em vice-versa}. By the Faber-Krahn inequality, 
the Nash inequality has symmetric decreasing optimizers, and hence so does 
the dual Nash inequality. Once we have characterized the symmetric 
decreasing optimizers for either inequality, a standard argument, recalled 
below,  that relies on the cases of equality in the Faber-Krahn 
inequality, shows that all optimizers are symmetric decreasing. 
Consequently we are allowed to restrict attention to symmetric decreasing 
functions in our search for optimizers of the dual inequality. We emphasize 
that our proof of the sharp dual form  of the Nash inequality is 
completely independent of the the proof of  the sharp Nash 
inequality \cite{CL}; all we need to know is that minimizers of the Nash 
inequality are necessarily symmetric decreasing. 

The first thing to do is show that an optimizer exists, either for the 
Nash inequality or its dual. The existence for the Nash inequality is a 
special case of a well known result for the full family of GNS inequalities
\eqref{gnsgen}. For a simple proof  in the case $p =2$ see \cite[Lemma 
4.2]{CFL}. This proof is easily adapted to $p=1, \, q=2$, which is the 
Nash inequality. We are now ready to prove Theorem \ref{main1}. 

\begin{proof}[Proof of Theorem \ref{main1}]

Suppose that  
$g$ is a symmetric decreasing optimizer. Let $h_g$ be the corresponding 
optimizer in \eqref{nash8X}. The support of $h_g$ is a closed ball
$\overline{B(r)}$ of 
some radius $r>0$. 
Then $g(x)$ must vanish for $|x| > r$ 
since any non-zero values of $g$ in the region $|x| > r $
contribute on the left side of (\ref{nash8B}), 
but not on the right side. 
This shows that the optimizers are compactly supported. 

Since $g\in Y$ 
and has compact support, $g\in L^1(\R^n)$.  Taking the first 
variation of $L_n (\|g\|_2^2)^{(n+2)/(n+4)}$ at the optimizer $g$, we obtain 
a $Cg$ for some constant $C$ depending on $\|g\|_2$ and $n$.  In computing 
the variation of the infimal convolution, we use the fact that a 
minimizing $h_g$ exists so that the infimal convolution \eqref{nash8X} is 
equal to
 $$\frac12 \langle g- h_g, (-\Delta)^{-1}(g - h_g)\rangle - \|h_g\|\ .
 $$
By the optimality of $h_g$ there is no contribution from the variation 
of $h_g$ as $g$ is varied, and so the
variation of the infimal convolution at $g$ is  $\varphi_g =  
(-\Delta)^{-1}(g - h_g)$.  It follows that
$$Cg = (-\Delta)^{-1}(g - h_g)\ .
$$
Taking the Laplacian of both sides, 
\begin{equation}\label{elA}
C(-\Delta) g = g - h_g\ .
\end{equation}
Unless the radial derivative of $g$ vanishes on the boundary of the ball 
supporting $\varphi_g$ (and hence $g$), there is a non-zero measure 
concentrated on the boundary of this ball included in $(-\Delta)g$. Since 
there is no such measure on the right side of (\ref{elA}), $g$ satisfies 
{\em Neumann (as well as Dirichlet) boundary conditions} on $B(r)$.
 Let 
 $$\langle g\rangle = \frac{\int_{\R^n} g{\rm d}x}{|B(r)|}\ .$$
 The function $h_g$ is very simple in this case. It equals the constant
 $ \langle g\rangle$ times the indicator function of the ball 
$B(r)$. Thus, letting $\Delta_N$ be the Neumann Laplacian on 
$B(r)$, 
 \begin{equation}\label{elA}
(-\Delta_N) (g -\langle g\rangle)   = \frac{1}{C}(g - \langle g\rangle)\ .
\end{equation}
That is, $(g -\langle g\rangle)$ (and not $g$ itself) is a radial 
eigenfunction of the Neumann 
Laplacian $\Delta_N$ on $B(r)$, and $g$ is radial decreasing. This is only 
possible if 
$g -\langle g\rangle$ is the radial eigenfunction of $-\Delta_N$ with the 
least strictly positive eigenvalue, $\mu_1(r)$, and, in this case, $C = 
1/\mu_1(r)$.

Our conclusion is that if $g$ is an optimizer supported in $\overline 
{B(r)} $  then $g-\langle g \rangle $ must be 
a multiple of 
this Neumann 
eigenfunction.

Unlike the case for the direct Nash inequality, which is 
homogeneous, there is no simple scaling now and thus, for each ball with 
given radius, there is exactly one optimizer supported in that ball.

Let  $\psi_1$ be the first non-constant radial eigenfunction  of 
$-\Delta_N$ on 
$B(r)$ that satisfies $\psi_1(0) = 1$.  This is a Bessel function 
\cite{CL}. For any $a>0$, define $c= -a\psi_1(x)$ for $|x| = r$, and define 
$g = a\psi_1 + c$. Then $g$ is radially symmetric and monotone decreasing, 
and $\langle g\rangle = c$.
 Thus, $g -\langle g\rangle = a\psi_1$, and for $x\in B(r)$,
 $$ -\Delta g = (-\Delta_N) (g -\langle g\rangle)  = \mu_1(r) a\psi_1 = \mu_1(r)( g  -\langle g\rangle) \ ,$$
 for all $x$,
 $-\Delta g = \mu_1(r)( g  -h_{\langle g\rangle})$, which is the same as
 $$(-\Delta)^{-1} ( g  -h_{\langle g\rangle}) = \frac{1}{\mu_1(r)}  g\ .$$
 Therefore,
 \begin{equation}\label{elB}
 \int_{\R^n} (-\Delta)^{-1} ( g  -h_{\langle g\rangle}) {\rm d}x = \int_{\R^n} g = \frac{1}{\mu_1(r)} |B(r)| \langle g \rangle\ .
 \end{equation}
 There is a unique value of $a$ such  that the right side of (\ref{elB}) 
equals $1$. For this value of $a$, 
 $h_g$ is the optimal $h$ for $g$ in the infimal 
convolution, and 
 \begin{eqnarray}
 \inf_{ h\in Y}\left\{  \frac12 \langle  g-h \, ,\,  (-\Delta)^{-1} g-h\rangle_{L^2} +  \|h\|_\infty  \ \right\} 
 &=& \frac12 \langle  g- h_{\langle g \rangle} \, ,\,  (-\Delta)^{-1}  g-h_{\langle g \rangle}\rangle_{L^2} + 
  \|h_\langle g \rangle\|_\infty\nonumber\\
  &=&  \frac12 \frac{1}{\mu_1(r)} \|  g- \langle g \rangle\|^2_{L^2(B(r))} \rangle_{L^2} + 
 \langle g \rangle\nonumber\\
  &=&   \frac12 \frac{1}{\mu_1(r)} \|g\|_2^2 + \frac12 \langle g \rangle\ , 
 \nonumber
  \end{eqnarray}
  where  the last equality is valid since the quantity on the right side of (\ref{elB}) equals $1$.  This function $g$
  is then the unique optimizer of the dual Nash inequality supported in 
$\overline{B(r)}$, and the sharp constant
  $L_n$ in \eqref{nash8A}  is given by \eqref{lndef}.
  By the remarks on scale invariance, this is independent of $r$, as it must be. 
  
  \end{proof}

\section{Weighted Nash inequalities}

The method of 
Section~\ref{sec3} can be used to compute a more general class of infimal 
convolutions. Let $w(x)$ be a strictly positive function on $R^n$. 
Let $\E_1^*(g)= \frac12 \langle g, -\Delta^{-1} g \rangle$ and let  
$E_2^*(g) =  \Vert g/w \Vert_\infty$. Then
\begin{equation} \label{nash9B}
 \E_1^*\square \E_2^*(g) =   \inf_{ h\in Y}\left\{  
\frac12 \langle g-h \, ,\,  (-\Delta)^{-1}g-h\rangle_{L^2} +  
\|h/w \|_\infty  
\ \right\} \ .
\end{equation}

When $g$ is nonnegative a unique optimal $h$ exists, and the other theorems 
of Sections \ref{sec3} and \ref{sec4}  apply as well. In particular, 
{\em when 
$g$ is symmetric decreasing and $w$ is symmetric increasing the minimizing 
$h$ equals a constant times $w$
inside a centered ball $B$ of some radius $r$ and equals $g$ outside that 
ball}. 
The radius and the constant are determined by the condition that $\int_B h 
= \int_B g$. 

When $w(x) =|x|^{p} $, with $p>0$, the method of Section \ref{sec5} can 
be applied to 
determine the constant in the sharp inequality 
\begin{equation}\label{dwnash}
L_{n,p}\Vert g\Vert_2^{2\beta} \geq  \inf_{ h\in Y}\left\{  
\frac12 \langle g-h \, ,\,  (-\Delta)^{-1}g-h\rangle_{L^2} +  
\|\, |x|^{-p} h \|_\infty  
\ \right\} \ .
\end{equation}
For $p=0$ this reduces to the dual Nash inequality. Otherwise, we can call 
this the {\bf dual weighted Nash inequality}. 

By the arguments of Section \ref{sec2} this is equivalent to the following 
{\bf sharp weighted Nash inequality}:
\begin{equation}\label{wnash}
\Vert f \Vert^{2+\gamma} \leq C_{n,p} \Vert \nabla f\Vert_2^2
\Vert \, |x|^{p}f\Vert_1^\gamma \ .
\end{equation}
This inequality is homogeneous of order $2+\gamma$ in $f$ and scales 
correctly for 
$\gamma =n/(4+2p)$

As in Section \ref{sec5}, since the right side of \eqref{wnash} is monotone 
under symmetric decreasing rearrangement, if \eqref{dwnash} has an 
optimizer $g$ then it has a symmetric decreasing optimizer  $g$.
To keep this section short we pass over 
the proof that an optimizer exists, and leave it  to the reader. (The argument in  \cite[Lemma 
4.2]{CFL} can be adapted to this end.)

Assume $g$ is a symmetric decreasing optimizer for \eqref{dwnash}. Then it 
satisfies the Euler-Lagrange equation \eqref{elA}, in which $h_g $ is the 
optimizer in the weighted infimal convolution. (Since $h_g $ is a 
minimizer, we do not have to be concerned about the variation of $h_g$ with 
respect to $g$.)

As before, let $\varphi$ be the potential of $g-h_g$ and let 
$\overline{B(r)}$ be the  closed ball that supports $\varphi$. By scale 
invariance we shall assume that $r=1$ and just write $B$.  Then, by 
\eqref{elA}, $\overline{B}  $  also supports $g$  Since $g$ is an 
optimizer, $g$ has the same support as $\varphi$, i.e., $\overline{B}$. 
This implies that 
$h_g$ equals a constant multiple $\alpha$  of $w =|x|^p$ inside the ball 
and is zero outside. 
The constant $\alpha$  is fixed by the condition that $h_g $ and $g$ have 
the same integrals. Consequently, $h_g = \left(\langle g\rangle / \langle 
|x^p|\rangle \right) |x|^p \mathbb{1}_{B}$. By substituting this into
\eqref{elA}, we find that $g$ must satisfy the homogeneous equation
\begin{equation} \label{elB}
 Cg = (-\Delta)^{-1} \left(g-\langle g\rangle \frac{|x|^p}{
 \langle 
|x^p|\rangle } 
\mathbb{1}_{B}\right)\ = \ \varphi \ .
\end{equation}

A slight variation from Section \ref{sec5} is that the optimality of $h_g$ 
yields the following condition on the optimizer, which breaks the 
homogeneity:
\begin{equation} \label{elC}
    \int_{\R^n} \varphi(x)\, |x|^p \, dx =1\ .
\end{equation}
Among all the solutions of \eqref{elB} only one of them will satisfy
\eqref{elC}. This is the optimizer supported by $B$; scaling provides the 
rest.  

The solution of the linear equation \eqref{elB}, subject to the linear
constraint \eqref{elC}, is left to the reader. When $p$ is an even integer, 
however,  \eqref{elB} can be cast as an eigenvalue problem, as in the 
unweighted case, except that the boundary conditions is  Robin instead 
of Neumann. We explain how this is done when $p=2$, from which the reader 
will easily perceive the solution to the $p=2k$ case. 

Apply the Laplacian to both sides of \eqref{elB} and obtain
\begin{equation} \label{elD}
 -\Delta g =  \frac1C \left(g-\langle g\rangle \frac{|x|^2}{
 \langle 
|x^2|\rangle } 
\mathbb{1}_{B}\right).
\end{equation}

Next, with $ \alpha := \langle g\rangle /\langle 
|x^2|\rangle$, \  add
$$
\Delta\left( \alpha  |x|^2   -  C 2n \alpha 
\right) = 2n \alpha  
$$ 
to both sides of \eqref{elD} and obtain, with
$f:= g- \alpha \left(    |x|^2 -  C 2n 
\right),$
\begin{equation}\label{elE}
 -\Delta f = \frac1C f.
\end{equation}
This equation holds everywhere in the unit ball $B$. As in Section 
\ref{sec5},  $g$ satisfies both Dirichlet and Neumann boundary conditions on 
the boundary of $B$. Therefore, on the boundary of $B$, the ratio of the 
normal derivative of $f$ to the value of $f$ is given by $\rho := 2 / 
(1-2nC) $, 
independent of $g$. This a Robin boundary condition.
We conclude that the optimizer $g$ is such that $f$ is an 
eigenfunction of the Laplacian on the unit ball with this Robin boundary 
condition, and that $1/C$ is the eigenvalue.

Since $g$ is symmetric decreasing, the only possibility for $f$ is that 
it, too, must be decreasing (because $f$ equals $g$ plus the symmetric decreasing 
function
$\alpha \left(  C 2n -  |x|^2 
\right)$). 

It remains to clarify the consistency condition between the number $\rho 
$ and the number $C$, which appears twice. Thus, with $\mu(\rho)$ defined 
to be the fundamental eigenvalue of $-\Delta_R$ with the $\rho$ 
Robin boundary condition, the consistency condition is
\begin{equation}\label{consist}
 \mu \left( \frac {2}{1-2nC}\right) = \frac 1C \ .
\end{equation}
This equation picks out a unique $C$, and determines $f$, and $g$, up to a 
multiple.  The multiple is fixed by \eqref{elC}. 

As an example, consider $n=3 $.  Then $f(x) = |x|^{-1} \sin ( \lambda |x| )
$  and $1/C = \lambda^2$. (In dimensions other than 3, this function will
be replaced by a spherical 
Bessel function.) The $\rho $ Robin boundary condition 
is satisfied with $\rho = \lambda \cot \lambda -1$. The consistency 
condition \eqref{consist} becomes  
\begin{equation}\label{consist}
 \lambda^2 = \frac1C \quad \text{and} \quad 
 \lambda \cot \lambda -1 = 
 \frac{1}{1-6C}   \ . 
\end{equation}
Therefore $\lambda \cot \lambda = (6 - 2\lambda^2)/(6 - \lambda^2)$, and this has a single solution $\lambda_0 = 1.60412258...$
in the interval $(0,\sqrt{6})$.   Then if we define $\rho_0 = 
2/(1-6\lambda_0^2)$, and let $f$ be the fundamental $\rho_0$-Robin 
eigenfunction of $-\Delta$ on the unit ball $B$, and then define $g$ on $B$ 
by  $g = f + \alpha(|x|^2 -6\lambda_0^2)$, where $\alpha$ is chosen so that 
$g$ vanishes at the boundary of $B$, and define $ 0$ on the complement of 
$B$, $g$ is the an optimizer for the weighted Nash inequality (\ref{wnash}). 
  Having found the optimizer, the determination of the constant is reduced 
to quadrature.   The structure of the optimizers for (\ref{wnash}) is 
analogous to the of the optimizers for the original Nash inequality found in 
\cite{CL}:  They are sums of a multiple of the weight (constant for the 
original Nash inequality) and an eigenfunction of $-\Delta$ in a ball, 
except now with Robin boundary conditions in place of Neumann boundary 
conditions. 

To handle the $n-$dimensional case, define $J(r)$ to be the solution to
$J''(r) + \frac{n-1}{r}J'(r) =  - J(r)$ with $J(0)=1 $ and $J'(0) =0$. 
(This function can be expressed in terms of Bessel functions.)
The $n-$dimensional version of \eqref{consist}, after the elimination of 
$C$,  is
\begin{equation}\label{consist2}
 \frac{\lambda J'(\lambda ) }{ J(\lambda)} =
 \frac{\lambda^2}{\lambda^2-2n}   \ . 
\end{equation}
Let $\lambda_0 $ denote the least non-zero solution of \eqref{consist2} and
then define 
\begin{equation}\label{consist3}
 \rho_0= \frac{\lambda_0^2}{\lambda_0^2-2n} \  .
\end{equation}

We thus arrive at the following result: 

\begin{thm}[\bf Optimizers for the weighted Nash inequality]   \label{thm3}
Let $f$ be the principal (radial) eigenfunction of $-\Delta$ in the unit 
ball $B \subset \R^n$
with the $\rho_0-$Robin boundary 
condition, and
with $\rho_0$ defined in \eqref{consist3}. Normalize $f$ so that $f(|x|=1) 
= 2/\rho_0$.
Then the function 
$$
g(x) = \begin{cases}f(x)-(|x|^2 -1+ 2/\rho_0 )&  x \in B \\
                    \qquad 0 &  x \notin B
       \end{cases}
$$
 is an optimizer for the 
weighted Nash inequality \eqref{wnash}. Every optimizer has the form
$c g(\lambda  x)$ with $\lambda  >0$. 
\end{thm}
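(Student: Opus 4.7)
The strategy is to organize the discussion that precedes the theorem statement into a complete argument. First I would establish the existence of a symmetric decreasing optimizer $g$ of \eqref{dwnash}: existence is obtained by adapting the compactness argument of \cite[Lemma 4.2]{CFL} to the weighted functional, and the reduction to radially symmetric decreasing $g$ proceeds along the lines of Section~\ref{sec5}, using that the right-hand side of the primal inequality \eqref{wnash} decreases under symmetric decreasing rearrangement of $f$ (because the weight $|x|^p$ is radially increasing, $\|\,|x|^p f\,\|_1$ decreases under rearrangement of $f$), together with strict-rearrangement results that force equality cases to be radial rearrangements.

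With symmetry in hand, I would derive the Euler--Lagrange equation. Since $h_g$ is itself the minimizer of the infimal convolution at $g$, its variation contributes nothing, and varying $g$ produces $C g = \varphi_{g-h_g}$ for some constant $C>0$. The analogue of Theorem~\ref{main2} for the weighted problem (proved along the lines of Section~\ref{sec3}) then identifies the closed ball $\overline{B(r)}$ that supports $\varphi$, shows that $g$ has the same support, and yields $h_g = \alpha\,|x|^2\,\mathbb{1}_{\overline{B(r)}}$ with $\alpha = \langle g\rangle/\langle |x|^2\rangle$. Scale invariance lets me take $r=1$. Applying $-\Delta$ to the Euler--Lagrange equation produces the inhomogeneous equation \eqref{elD}, and the affine substitution $f := g - \alpha(|x|^2 - 2nC)$ absorbs the polynomial source, reducing matters to the homogeneous Helmholtz equation \eqref{elE} on $B$.

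The next task is to extract the boundary conditions on $f$. Because $g - h_g$ is free of any boundary layer (no delta-function singularity on $\partial B$ can appear on the right-hand side of the Euler--Lagrange equation), $g$ satisfies both Dirichlet and Neumann conditions on $\partial B$, exactly as in Section~\ref{sec5}. Translating these to $f$ via the affine substitution yields the Robin condition, and the resulting ratio $f'(1)/f(1)$ is independent of $\alpha$ and determines $\rho$ as in \eqref{consist3}. Writing the radial profile of $f$ as $J(\lambda r)$ with $\lambda^2 = 1/C$ and $J$ the solution of the ODE displayed just before \eqref{consist2} normalized by $J(0)=1$, $J'(0)=0$, the Robin condition collapses to the single transcendental equation \eqref{consist2} in $\lambda$. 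The principal positive root $\lambda_0$ is selected because $g$ is radially decreasing, which forces $f$ itself to be radially monotone on $[0,1]$ (since $f$ equals $g$ plus the symmetric decreasing quadratic $\alpha(2nC - |x|^2)$). The normalization \eqref{elC} then fixes the overall scale, and the scale invariance of \eqref{dwnash} generates the full family $c\, g(\lambda x)$.

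The main obstacle I anticipate is the monotonicity and positivity verification that singles out $\lambda_0$ and rules out higher-order radial Robin eigenfunctions: one must check that for $\lambda = \lambda_0$ the function $f(r) - (r^2 - 1 + 2/\rho_0)$ is non-negative and radially decreasing on $[0,1]$, and that for any larger root of \eqref{consist2} this fails. This is essentially a sharpened ODE monotonicity argument paralleling the Neumann case in \cite{CL}, and, combined with the one-dimensionality of the radial $\rho_0$-Robin eigenspace at eigenvalue $\lambda_0^2$, yields uniqueness of the optimizer up to the scale invariance already identified.
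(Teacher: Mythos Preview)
Your proposal is correct and follows essentially the same approach as the paper: the paper's ``proof'' of Theorem~\ref{thm3} is precisely the discussion preceding the theorem statement (existence and reduction to symmetric decreasing via rearrangement on the primal side, the Euler--Lagrange equation $Cg=\varphi_{g-h_g}$, identification of $h_g$ as $\alpha|x|^2\mathbb{1}_B$, the affine substitution $f=g-\alpha(|x|^2-2nC)$ yielding \eqref{elE} with Robin boundary data, and the consistency equation \eqref{consist2} selecting $\lambda_0$ through the monotonicity of $g$). Your write-up organizes this discussion into a linear argument and is, if anything, slightly more careful than the paper about the strict-rearrangement step needed for the uniqueness claim and about the ODE verification that singles out $\lambda_0$ among the roots of \eqref{consist2}.
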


A similar argument holds for $|x|^{2k}$, for any $k$. One adds the function
$\Delta\left(\sum_{j=0}^k, \alpha_j |x|^{2j} \right)$, with suitable 
constants $\alpha_j$, to both sides of the analog of \eqref{elD}.

\end{document}